\newtheorem{lemma}{Lemma}[section]
\newtheorem{corollary}{Corollary}[section]
\newtheorem{theorem}{Theorem}[section]
\theoremstyle{definition}
\newtheorem{remark}{Remark}[section]
\newtheorem{definition}{Definition}[section]
\newtheorem{example}{Example}[section]
\newcommand{\N}{\mathbb{N}}
\DeclareMathOperator*{\argmax}{arg\,max}
\begin{document}
\pagenumbering{arabic}
\setcounter{page}{1}

\title{On the coefficients of the distinct monomials in the expansion of $x_1(x_1+x_2)\cdots(x_1+x_2+\cdots+x_n)$}

\author{Sela Fried \thanks{The author is a postdoctoral fellow in the Department of Computer Science at the Ben-Gurion University of the Negev. Research Supported by the Israel Science Foundation (ISF) through grant No.
1456/18 and European Research Council Grant number: 949707.}}
\date{} 
\maketitle

\begin{abstract}
We initiate the study of the coefficients of the distinct monomials in the expansion of the multivariate polynomials $x_1(x_1+x_2)\cdots(x_1+x_2+\cdots+x_n), n\in\N$. In particular we obtain several results regarding their maximal coefficients.
\end{abstract} 

\section{Introduction}
Let $n\in\N$ and let $x_1,\ldots,x_n$ be indeterminates. It is well known that among the  multitude of their combinatorial interpretations, the Catalan numbers also count the distinct monomials in the expansion of the multivariate polynomials $$p_n=x_1(x_1+x_2)\cdots(x_1+x_2+\cdots+x_n),\;\;n\in\N,$$ a result that goes back at least to \cite{shallit1982e2972}. It seems that despite the naturalness of these polynomials, the coefficients of their distinct monomials have not been given attention by the community. In particular, to the best of our knowledge, a closed formula for their maximum is not known. In this work we initiate the study of these coefficients and accomplish the following: Upon establishing several elementary properties including a closed formula for them (Lemma \ref{lem; coef}), we show that, for every $n\in\N$, a maximal coefficient in the expansion of $p_n$ is attained at a monomial belonging to a certain set $\mathcal{M}_n$ that has the same cardinality as the set of all partitions of $n$ with distinct parts (Theorem \ref{thm; 1} and Lemma \ref{lem; 199}). Finally, we provide an algorithm (Algorithm \ref{IR}) that successively (and greedily) generates a sequence of monomials $(r_n)_{n\in\N}$ such that $r_n\in \mathcal{M}_n$ for every $n\in\N$ and conjecture that the corresponding coefficients are actually maximal.

\bigskip

Our interest in the polynomials $p_n, n\in\N$ was triggered during our work \cite{fried2021restrictiveness} on the restrictiveness of  stochastic orders where we established a closed formula (\cite[Lemma 3.6]{fried2021restrictiveness}) for the probability that a random probability distribution that is drawn uniformly from the probability $n$-simplex is greater than a fixed probability distribution with respect to the usual stochastic order. The formula involves a sum over all  distinct monomials of $p_{n-1}$. Thus, the distinct monomials of $p_n,n\in\N$ have a direct application in probability theory.

\section{Main results}

This section consist of two parts: In the first we establish several elementary properties of the coefficients of $p_n, n\in\N$ including a closed formula for them and in the second we address the problem of finding the maximal coefficients of $p_n, n\in\N$. Before we begin, let us, for completeness, prove the claim stated in the introduction that the distinct monomials in the expansion of $p_n, n\in\N$ are counted by the Catalan numbers which have the explicit formula $C_n = \frac{1}{n+1}\binom{2n}{n}$ for every $n\in\N_0$ where $\N_0 = \N\cup\{0\}$ (e.g. \cite[Theorem 1.4.1]{stanley2015catalan}). To simplify formulations throughout this work, whenever we refer to a monomial of $p_n$, we mean a monomial in the expansion of $p_n$ after combining like terms. In particular, in our terminology, the monomials of $p_n$ are distinct. Likewise, whenever we refer to a coefficient (of a monomial) of $p_n$, we mean the coefficient (of the monomial) after expanding $p_n$ and combining like terms. Finally, unless stated otherwise, $n\in\N$.

\begin{lemma}\label{lem; 1}
Let $\mathcal{P}_n$ denote the set of all monomials of $p_n$. Then $|\mathcal{P}_n|=C_n$.
\end{lemma}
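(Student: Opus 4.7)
The plan is to reduce to a classical Catalan interpretation. Expanding $p_n = \prod_{k=1}^n (x_1 + \cdots + x_k)$ by distributivity yields a sum, over all sequences $(i_1, \ldots, i_n)$ with $1 \leq i_k \leq k$, of the monomial $x_{i_1} x_{i_2} \cdots x_{i_n}$. After combining like terms, two such sequences contribute to the same monomial iff they have the same exponent vector $(a_1, \ldots, a_n) \in \N_0^n$, where $a_j = |\{k : i_k = j\}|$. Thus $|\mathcal{P}_n|$ equals the number of exponent vectors realized in this way.

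I would then characterize the realizable vectors as exactly those $(a_1, \ldots, a_n) \in \N_0^n$ with $\sum_{j=1}^n a_j = n$ and
\[
a_1 + a_2 + \cdots + a_k \geq k \qquad \text{for every } k \in \{1, \ldots, n\}.
\]
Necessity is immediate: for any admissible sequence the $k$ indices $i_1, \ldots, i_k$ all lie in $\{1, \ldots, k\}$, so $a_1 + \cdots + a_k \geq k$. For sufficiency I would exhibit an explicit realization by declaring $i_k$ to be the smallest $j$ with $a_1 + \cdots + a_j \geq k$; the prefix inequality forces $i_k \leq k$, and a direct count shows that the value $j$ is taken by exactly $a_j$ of the $i_k$'s.

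Finally, I would set up a bijection with Dyck paths of semilength $n$ via $(a_1, \ldots, a_n) \mapsto U^{a_1} D U^{a_2} D \cdots U^{a_n} D$, where $U$ and $D$ denote up- and down-steps in the plane. The total length is $2n$, the path returns to zero since $\sum_j a_j = n$, and the non-negativity condition just before each $D$ is precisely $a_1 + \cdots + a_k \geq k$. Since Dyck paths of semilength $n$ are enumerated by $C_n$, the lemma follows. The only substantive step is the sufficiency half of the characterization; the remaining steps are essentially bookkeeping. A purely inductive proof via a first-return decomposition of $p_n$ would also work, but the bijective route above is cleaner and dovetails with the exponent-vector viewpoint used later in the paper.
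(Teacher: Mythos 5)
Your proof is correct. The first half coincides in substance with the paper's: you characterize the achievable exponent vectors as the set the paper calls $\mathcal{A}_n$ (your prefix-sum condition $a_1+\cdots+a_k\geq k$ is equivalent to the paper's suffix-sum condition $\sum_{i=k+1}^n a_i\leq n-k$ given $\sum_i a_i=n$), and your explicit realization --- taking $i_k$ to be the least $j$ with $a_1+\cdots+a_j\geq k$ --- is a cleaner, non-recursive version of the paper's inductive construction of the $i_k$'s. Where you genuinely diverge is the final step: the paper passes from $\mathcal{A}_n$ to the shifted set $\mathcal{B}_n$ of \L{}ukasiewicz-type sequences and then to plane trees with $n+1$ vertices via a depth-first-search encoding, proved by induction on $n$; you instead map $(a_1,\ldots,a_n)\mapsto U^{a_1}D\cdots U^{a_n}D$ directly onto Dyck paths of semilength $n$, where the nonnegativity of the path is exactly your prefix inequality. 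Your route is shorter and avoids the induction entirely (the Dyck-path correspondence is visibly a bijection once the height computation is written down), at the cost of invoking a different standard Catalan family; the paper's detour through plane trees buys nothing extra here beyond matching the specific reference it cites. Both are complete proofs, and yours dovetails equally well with the identification $\mathcal{P}_n\cong\mathcal{A}_n$ that the rest of the paper relies on.
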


\begin{proof}
The Catalan numbers have several fundamental interpretations. Thus, it suffices to construct a bijection between $\mathcal{P}_n$ and a set consisting of the elements of one of these interpretations. Following \cite{stanley2015catalan}, we shall prove that there is a bijection between $\mathcal{P}_n$ and the set $\mathcal{T}_n$ of plane trees with $n+1$ vertices (cf. \cite[p. 6 and Theorem 1.5.1]{stanley2015catalan}). The bijection goes through two auxiliary sets $\mathcal{A}_n$ and $\mathcal{B}_n$.

Let
$$\mathcal{A}_n = \left\{(a_1,\ldots,a_n)\in \N_0^n\;|\;\sum_{i={k+1}}^n a_i\leq n-k, \;\forall 1\leq k\leq n-1, \;\sum_{i=1}^n a_i=n\right\}.$$ We shall show that there are two maps $\Theta\colon \mathcal{A}_n\to \mathcal{P}_n$ and $\Phi\colon \mathcal{P}_n\to \mathcal{A}_n$ such that $\Theta\circ\Phi=\text{id}_{\mathcal{P}_n}$ and $\Phi\circ\Theta=\text{id}_{\mathcal{A}_n}$. For $(a_1,\ldots,a_n)\in \mathcal{A}_n$ let $$\Theta((a_1,\ldots,a_n))=x_1^{a_1}\cdots x_n^{a_n}.$$ The proof that $x_1^{a_1}\cdots x_n^{a_n}\in \mathcal{P}_n$ relies on the following observation: Suppose $i_1,\ldots,i_n\in\{1,\ldots,n\}$ are such that $1\leq i_k\leq k$ for every $1\leq k\leq n$. Then $x_{i_1}\cdots x_{i_n}\in P_n$. We define $i_1,\ldots,i_n$ as follows: First, if $a_n=1$ then we set $i_n=n$. Now, suppose we have already defined $i_{n+1-\sum_{i=k}^n a_i},\ldots,i_n$ for some $1<k\leq n$ such that $1\leq i_r\leq r$ for every $n+1-\sum_{i=k}^n a_i\leq r\leq n$. Set $i_{n+1-\sum_{i=k-1}^n a_i}=\cdots= i_{n-\sum_{i=k}^n a_i}=k-1$. Since $(a_1,\ldots,a_n)\in\mathcal{A}_n$, we have $$\sum_{i=k-1}^na_i\leq n+2-k\iff n+1-\sum_{i=k-1}^na_i\geq k-1.$$ It follows that $1\leq i_r\leq r$ for every $n+1-\sum_{i=k-1}^n a_i\leq r\leq n$.
    
In the other direction, for $x_1^{a_1}\cdots x_n^{a_n}\in \mathcal{P}_n$ we let $\Phi(x_1^{a_1}\cdots x_n^{a_n}) = (a_1,\ldots,a_n)$. To see that $(a_1,\ldots,a_n)\in \mathcal{A}_n$, first notice that $a_n\leq 1 = n-(n-1)$. Suppose now that we have already shown that $\sum_{i=k+1}^n a_i\leq n-k$ for some $1<k\leq n-1$. By definition of $p_n$, the indeterminate $x_k$ can appear in at most $n-k+1$ places, of which $\sum_{i=k+1}^n a_i$ places are already taken. Thus, $$a_k\leq n-k+1-\sum_{i=k+1}^n a_i\iff \sum_{i=k}^n a_i\leq n-(k-1).$$ 

Consider now the set $$\mathcal{B}_n =\left\{(b_1,\ldots,b_n)\in\N_0\cup\{-1\}\;|\;\sum_{i=1}^kb_i\geq0,\;\forall 1\leq k\leq n-1, \sum_{i=1}^nb_i=0\right\}.$$ 
One verifies immediately that
the map  $(a_1,\ldots,a_n)\mapsto (a_1-1,\ldots,a_n-1)$ gives a bijection between $\mathcal{A}_n$ and $\mathcal{B}_n$. 

It remains to show that there is a bijection between $\mathcal{B}_n$ and $\mathcal{T}_n$. By \cite[82 on p. 71]{stanley2015catalan}, the following procedure provides such a bijection: Perform a depth-first search through a plane tree with $n+1$ vertices and every time a vertex is encountered for the first time, record one less than its number of children, except that the last vertex is ignored. We prove by induction on $n$ that the resulting sequence belongs to $\mathcal{B}_n$. For $n=1$ the plane tree has $2$ vertices and the corresponding sequence is necessarily $(0)$ which obviously belongs to $\mathcal{B}_1$. Suppose the claim holds for plane trees with $n$ vertices and consider a plane tree $T$ with $n+1$ vertices together with the corresponding sequence $(b_1,\ldots,b_n)$. Deleting the last vertex from $T$ gives a plane tree $T'$ with $n$ vertices. Let $1\leq l\leq n$ be the index of the parent of the last vertex of $T$. We distinguish between two cases: 
 
\begin{enumerate}
    \item $l=n$. In this case $T$ must end with a sequence of three vertices that make a tree of depth two. Thus, $b_n=0$ and $(b_1,\ldots,b_{n-1})$ is the sequence corresponding to $T'$. Using the induction hypothesis and putting back $b_n=0$ we see that $(b_1,\ldots,b_n)\in\mathcal{B}_n$.
    \item $l<n$. In this case, the $n$th vertex of $T$ is a leaf and therefore $b_n=-1$. Now, $b_l\geq 0$ and  $(b_1,\ldots,b_{l-1},b_l-1,b_{l+1},\ldots,b_{n-1})$ is the sequence corresponding $T'$, which, by the induction hypothesis, belongs to $\mathcal{B}_{n-1}$. Putting back $b_n=-1$ and replacing $b_l-1$ with $b_l$ we obtain the original sequence and conclude that it belongs to $\mathcal{B}_n$.
\end{enumerate}

One can show, again by induction, that any $(b_1,\ldots,b_n)\in\mathcal{B}_n$ induces a unique plane tree with $n+1$ vertices. We leave the details out.
\end{proof}

\begin{remark}
It follows from the proof of Lemma \ref{lem; 1} that we may identify $\mathcal{P}_n$ with $\mathcal{A}_n$ and we shall exploit this equivalent representation freely throughout this work. In particular, we shall refer to elements of $\mathcal{A}_n$ as `monomials'.
\end{remark}

\subsection{Elementary properties of the coefficients of $p_n$}

Our first result is an explicit formula for the coefficients of $p_n$. We shall use the following notation:

\begin{definition}
For $(a_1,\ldots,a_n)\in \mathcal{A}_n$ we denote by $c_{(a_1,\ldots,a_n)}$ the corresponding coefficient.
\end{definition}

\begin{lemma}\label{lem; coef}
Let $(a_1,\ldots,a_n)\in \mathcal{A}_n$. Then $$c_{(a_1,\ldots,a_n)}=\prod_{k=1}^{n-1}\frac{n-k+1-\sum_{i=k+1}^{n}a_{i}}{a_{k}!}.$$
\end{lemma}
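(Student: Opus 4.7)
The plan is to derive the formula in two steps: first compute $c_{(a_1,\ldots,a_n)}$ as a concrete combinatorial count, then reduce the resulting expression to the claimed product. Expanding
$$p_n=\prod_{k=1}^n(x_1+\cdots+x_k)=\sum_{\substack{(i_1,\ldots,i_n)\\1\leq i_k\leq k}}x_{i_1}\cdots x_{i_n}$$
identifies $c_{(a_1,\ldots,a_n)}$ with the number of tuples $(i_1,\ldots,i_n)$ satisfying $i_k\leq k$ for every $k$ and in which each value $j\in\{1,\ldots,n\}$ occurs exactly $a_j$ times.

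To count these tuples I would sweep $j$ from $n$ down to $1$ and, at each stage, choose the set of positions $k$ for which $i_k=j$. Such positions must lie in $\{j,j+1,\ldots,n\}$ and must avoid those already reserved for $x_{j+1},\ldots,x_n$. Writing $M_k:=n-k+1-\sum_{i=k+1}^{n}a_i$, exactly $M_j$ positions remain at step $j$, and picking $a_j$ of them contributes $\binom{M_j}{a_j}$; the inequalities defining $\mathcal{A}_n$ ensure $M_k\geq a_k\geq 0$ throughout. Multiplying over all stages yields the intermediate identity
$$c_{(a_1,\ldots,a_n)}=\prod_{k=1}^{n}\binom{M_k}{a_k}.$$

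The remaining task, matching this with the claimed product, is where the real work lies. The key observation is the telescoping relation $M_{k-1}=M_k-a_k+1$, which is immediate from the definition of $M_k$ (with the convention $M_0=1$, consistent with $\sum_{i=1}^n a_i=n$). Rewriting $\binom{M_k}{a_k}=\frac{M_k!}{a_k!\,(M_{k-1}-1)!}$ and multiplying over $k$, the factorials cancel in a chain, each interior ratio $\frac{M_k!}{(M_k-1)!}$ contributing a single factor $M_k$. Plugging in $M_n=1$, $M_0=1$, and $a_n!=1$ (which holds because $a_n\in\{0,1\}$ for $(a_1,\ldots,a_n)\in\mathcal{A}_n$) collapses the product to $\prod_{k=1}^{n-1}\frac{M_k}{a_k!}$, as required. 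The main obstacle will be keeping the telescoping bookkeeping straight, in particular checking that the boundary conventions on $M_0$ and $M_n$ are consistent and that the simplification behaves correctly in the degenerate cases where some $a_k=0$; once the relation $M_{k-1}=M_k-a_k+1$ is in hand, however, the cancellations are mechanical.
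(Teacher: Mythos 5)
Your proposal is correct and follows essentially the same route as the paper: sweep the indeterminates from $x_n$ down to $x_1$, count the admissible factor choices at each stage to get $\prod_k\binom{M_k}{a_k}$ with $M_k=n-k+1-\sum_{i=k+1}^n a_i$, and telescope via $M_{k-1}=M_k-a_k+1$ together with $M_0=M_n=1$ and $a_n!\,=1$. The bookkeeping you outline is exactly the computation carried out in the paper's displayed chain of equalities.
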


\begin{proof}
Beginning with $a_n$, we notice that $x_n$ can be taken solely from the last term of the product in the definition of $p_n$. Thus, there are $\binom{1}{a_n}$ possibilities to do that. Proceeding to $a_{n-1}$ we notice that $x_{n-1}$ can be taken only from the last two terms of the product, but not from those who contributed $x_n$. This gives $\binom{2-a_n}{a_{n-1}}$ possibilities. Continuing so until we reach $a_1$, we conclude that the number of possibilities to obtain $(a_1,\ldots,a_n)$ is given by

\begin{align}
\prod_{k=0}^{n-1}\binom{k+1-\sum_{i=n-k+1}^n a_i}{a_{n-k}}=&\prod_{k=0}^{n-1}\frac{\left(k-\sum_{i=n-(k-1)}^{n}a_{i}\right)!\left(k+1-\sum_{i=n-(k-1)}^{n}a_{i}\right)}{a_{n-k}!\left(k+1-\sum_{i=n-k}^{n}a_{i}\right)!}\nonumber\\=&\prod_{k=0}^{n-1}\frac{k+1-\sum_{i=n-(k-1)}^{n}a_{i}}{a_{n-k}!}\nonumber\\=&\prod_{k=1}^{n-1}\frac{n-k+1-\sum_{i=k+1}^{n}a_{i}}{a_{k}!}\nonumber.
\end{align}
\end{proof}

It is desirable, when writing down the expansion of the different $p_n,n\in\N$, to maintain consistency regarding the order of their terms. To this end we define an ordering on $\mathcal{A}_n$. This, of course, induces an ordering of the corresponding coefficients. A reasonable choice is lexicographic and in decreasing order:

\begin{definition}\label{def; 5}
Let $a=(a_1,\ldots,a_n),b=(b_1,\ldots,b_n)\in \mathcal{A}_n$ such that $a\neq b$ and let $k=\min\{1\leq i\leq n\;|\;a_i\neq b_i\}$. We write $a\prec b$ if $a_k > b_k$.
\end{definition} 

\begin{example}
In $\mathcal{A}_3$: $$(3,0,0)\prec(2,1,0)\prec(2,0,1)\prec(1,2,0)\prec(1,1,1).$$
\end{example}

\begin{example}
In the following ``triangle" we present the coefficients of $p_1,\ldots,p_5$ ordered according to Definition \ref{def; 5} (cf. OEIS A347917):

$$\begin{smallmatrix}
1 && & & & & & & & & & & & & & & & & & & & & & & & & & & & & & & & & & & & & & & & \\
1 &1 && & & & & & & & & & & & & & & & & & & & & & & & & & & & & & & & & & & & & & & \\ 
1 &2 &1 &1 &1 && & & & & & & & & & & & & & & & & & & & & & & & & & & & & & & & & & & & \\
1 &3 &2 &1 &3 &4 &2 &1 &1 &1 &2 &1 &1 &1 && & & & & & & & & & & & & & & & & & & & & & & & & & & \\
1 &4 &3 &2 &1 &6 &9 &6 &3 &3 &4 &2 &1 &1 &4 &9 &6 &3 &6 &8 &4 &2 &2 &1 &2 &1 &1 &1 &1 &3 &2 &1 &3 &4 &2 &1 &1 &1 &2 &1 &1 &1. 
\end{smallmatrix}$$
\end{example}

\begin{example}
Consider $c_{(n,0,\ldots,0)}$ and $c_{(1,\ldots,1)}$ which are, respectively, the first and last coefficients of $p_n$. It holds $$c_{(n,0,\ldots,0)} = c_{(1,\ldots,1)} = 1.$$ Indeed, \begin{align}c_{(n,0,\ldots,0)}=&\frac{1}{n!}\prod_{k=1}^{n-1}\left(n-k+1\right)=1 \;\;\text{ and}\nonumber\\c_{(1,\ldots,1)}=&\prod_{k=1}^{n-1}\left(n-k+1-(n-k)\right)=1.\nonumber\end{align}
\end{example}

\begin{lemma}
The coefficients of $p_{n+1}$ contain (at least) two copies of the coefficients of $p_n$.
\end{lemma}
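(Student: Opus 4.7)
The plan is to exhibit two coefficient-preserving injections $\phi_1,\phi_2\colon \mathcal{A}_n\to\mathcal{A}_{n+1}$; each one embeds the coefficient sequence of $p_n$ inside that of $p_{n+1}$. Both verifications reduce to substituting into Lemma \ref{lem; coef} and tracking a shift by one in the index of the product.

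I would first define $\phi_1(a_1,\ldots,a_n)=(a_1,\ldots,a_n,1)$. Membership in $\mathcal{A}_{n+1}$ is immediate: the sum becomes $n+1$; the constraint at $k=n$ reads $1\le 1$; and for $k<n$ the constraint $\sum_{i=k+1}^{n+1}a_i \le n+1-k$ reduces to $\sum_{i=k+1}^{n}a_i \le n-k$, which holds because $(a_1,\ldots,a_n)\in\mathcal{A}_n$. Applying Lemma \ref{lem; coef} to $(a_1,\ldots,a_n,1)$, the factor indexed by $k=n$ is $\tfrac{1}{a_n!}$, which equals $1$ since $a_n\in\{0,1\}$ for every element of $\mathcal{A}_n$. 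For each $1\le k\le n-1$ the factor simplifies, after cancelling the contribution of the appended $1$ in the tail sum, to $\tfrac{n-k+1-\sum_{i=k+1}^{n}a_i}{a_k!}$, i.e.\ exactly the $k$-th factor in $c_{(a_1,\ldots,a_n)}$. Multiplying yields $c_{\phi_1(a_1,\ldots,a_n)}=c_{(a_1,\ldots,a_n)}$.

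I would then define $\phi_2(a_1,\ldots,a_n)=(1,a_1,\ldots,a_n)$. Membership in $\mathcal{A}_{n+1}$ is again direct: the sum is $n+1$; the constraint at $k=1$ reads $n\le n$; and for $k\ge 2$ the constraints reduce, after shifting the index of summation, to the defining inequalities of $\mathcal{A}_n$. Applying Lemma \ref{lem; coef}, the factor at $k=1$ equals $\tfrac{n+1-n}{1!}=1$, while for $2\le k\le n$ the substitution $l=k-1$ converts the factor into $\tfrac{n-l+1-\sum_{i=l+1}^{n}a_i}{a_l!}$, i.e.\ the $l$-th factor of $c_{(a_1,\ldots,a_n)}$. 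The product over $k=1,\ldots,n$ thus collapses to $c_{(a_1,\ldots,a_n)}$.

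Together, $\phi_1$ and $\phi_2$ display two copies of the coefficient sequence of $p_n$ inside that of $p_{n+1}$: one copy sits on the monomials whose last exponent is $1$, the other on those whose first exponent is $1$. The only delicate point I anticipate is the bookkeeping of indices when matching the length-$n$ product provided by Lemma \ref{lem; coef} with the length-$(n-1)$ product defining $c_{(a_1,\ldots,a_n)}$; beyond this routine verification, no substantive combinatorial obstacle arises.
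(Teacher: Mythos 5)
Your proof is correct and takes essentially the same route as the paper: both exhibit the two coefficient-preserving embeddings $(a_1,\ldots,a_n)\mapsto(a_1,\ldots,a_n,1)$ and $(a_1,\ldots,a_n)\mapsto(1,a_1,\ldots,a_n)$ and verify via Lemma \ref{lem; coef} that the coefficient is unchanged. If anything your index bookkeeping is cleaner than the paper's; note only that both arguments share the minor caveat that the two images overlap on tuples that begin and end with $1$, so ``two copies'' is not literally accounted for on that overlap.
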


\begin{proof}
Let $2\leq n\in\N$ and let $(a_1,\ldots,a_{n-1})\in A_{n-1}$. Clearly, $$(1,a_1,\ldots,a_{n-1}), (a_1,\ldots,a_{n-1},1)\in \mathcal{A}_n.$$ Now,
\begin{align}
    c_{(a_1,\ldots,a_{n-1}, 1)} = & \prod_{k=1}^{n-1}\frac{n-1-k+1-\sum_{i=k+1}^{n-1}a_{i}}{a_{k}!}\nonumber\\ = & \prod_{k=1}^{n-2}\frac{n-1-k+1-\sum_{i=k+1}^{n-1}a_{i}}{a_{k}!}\nonumber\\= & c_{(a_1,\ldots,a_{n-1})}\nonumber
    \end{align}
where in the second equality we used that $a_{n-1}!=1$. Similarly,
\begin{align}
    c_{(1,a_1,\ldots,a_{n-1})} =  & \frac{n-1+1-\sum_{i=1}^{n-1}a_{i}}{1!}\prod_{1=2}^{n-1}\frac{n-k+1-\sum_{i=k+1}^{n}a_{i-1}}{a_{k-1}!}\nonumber\\= &(n-(n-1))\prod_{1=1}^{n-2}\frac{n-1-k+1-\sum_{i=k+1}^{n-1}a_{i}}{a_k!}\nonumber\\=& c_{(a_1,\ldots,a_{n-1})}.\nonumber
    \end{align}
\end{proof}

\begin{lemma}\label{lem; 50}
For $2\leq i\leq n$, the $i$th coefficient of $p_n$ is $n+1-i$. In particular, every natural number is a coefficient of infinitely many $p_n$s.
\end{lemma}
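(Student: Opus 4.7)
The plan is to identify the $i$th monomial of $p_n$ explicitly under the order $\prec$ and then invoke Lemma \ref{lem; coef}. Since $a_1\le n$ for any $(a_1,\ldots,a_n)\in\mathcal{A}_n$ and $a_1=n$ forces $a_2=\cdots=a_n=0$, the first monomial is $(n,0,\ldots,0)$. Every monomial with $a_1=n-1$ must have the remaining coordinates summing to $1$, so these are exactly the vectors
\[
e_i^{(n)}:=(n-1,\underbrace{0,\ldots,0}_{i-2},1,\underbrace{0,\ldots,0}_{n-i})\qquad (2\le i\le n).
\]
Each $e_i^{(n)}$ trivially satisfies the defining inequalities of $\mathcal{A}_n$, and Definition \ref{def; 5} gives $e_2^{(n)}\prec e_3^{(n)}\prec\cdots\prec e_n^{(n)}$, with all of these coming right after $(n,0,\ldots,0)$ (since any monomial with $a_1\le n-2$ lies further out in $\prec$). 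Hence the $i$th monomial of $p_n$, for $2\le i\le n$, is $e_i^{(n)}$.

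Next I would apply Lemma \ref{lem; coef} to $e_i^{(n)}$. The four regimes of the index $k$ give: $k=1$ contributes $(n-1)/(n-1)!=1/(n-2)!$; each $2\le k\le i-1$ contributes $n-k$ (because $a_k=0$ and $\sum_{j>k}a_j=1$); $k=i$ contributes $(n-i+1)/1!$; and each $i+1\le k\le n-1$ contributes $n-k+1$. Collapsing the resulting products of consecutive integers into factorials gives
\[
c_{e_i^{(n)}}=\frac{1}{(n-2)!}\cdot\frac{(n-2)!}{(n-i)!}\cdot(n-i+1)\cdot(n-i)!\;=\;n+1-i,
\]
with the empty-product conventions handling the boundary cases $i=2$ and $i=n$.

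For the final claim, fix any $N\in\N$. For every $n\ge N+1$ the index $i:=n+1-N$ lies in $\{2,\ldots,n\}$, and by the computation above $N=c_{e_i^{(n)}}$ is a coefficient of $p_n$; this yields infinitely many such $n$. The whole argument is essentially routine; the only mild point requiring care is the identification of the $i$th monomial under the reverse-lexicographic order $\prec$, after which the coefficient computation is a direct and local application of Lemma \ref{lem; coef}.
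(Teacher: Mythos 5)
Your proposal is correct and follows essentially the same route as the paper: it identifies the $i$th monomial as $(n-1,0,\ldots,0,1,0,\ldots,0)$ with the $1$ in position $i$ and evaluates its coefficient via Lemma \ref{lem; coef}, obtaining $n+1-i$. The only difference is that you explicitly justify that this monomial is indeed the $i$th one under the order $\prec$ of Definition \ref{def; 5}, a point the paper leaves implicit.
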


\begin{proof}
Let $(a_1,\ldots,a_n) = (n-1,0,\ldots, 0, \overbrace{1}^{i\textnormal{th place}}, 0, \ldots0)\in\mathcal{A}_n$. Then \begin{align}c_{(a_1,\ldots,a_n)}=&\frac{1}{(n-1)!}\prod_{k=1}^{n-1}\left(n-k+1-\sum_{l=k+1}^{n}a_l\right)\nonumber\\ = &\nonumber\frac{1}{(n-1)!}\prod_{k=1}^{i-1}\left(n-k\right)\prod_{k=i}^{n-1}\left(n-k+1\right)\nonumber\\=&n+1-i\nonumber.\end{align}
\end{proof}

\begin{corollary}
For $m\in\N$ denote by $\mathbb{P}(m)$ the set of prime divisors of $m$. Then $$\bigcup_{i=m}^{n-1}\mathbb{P}(m) = \bigcup_{(a_1,\ldots,a_n)\in\mathcal{A}_n}\mathbb{P}(c_{(a_1,\ldots,a_n)}).$$
\end{corollary}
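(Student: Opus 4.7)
The plan is to verify both inclusions in the claimed equality, which (modulo apparent typos in the indexing on the left-hand side) asserts that the primes occurring among the divisors of the coefficients of $p_n$ are precisely the primes dividing some integer in $\{1,\ldots,n-1\}$.

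For the inclusion ``every prime dividing some integer in $\{1,\ldots,n-1\}$ divides a coefficient of $p_n$", I would invoke Lemma \ref{lem; 50} directly. Indeed, as $i$ ranges over $\{2,\ldots,n\}$, the $i$th coefficient of $p_n$ equals $n+1-i$ and therefore exhausts every value in $\{1,\ldots,n-1\}$; hence every prime divisor of such an integer divides the corresponding coefficient.

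For the reverse inclusion, I would not use the final formula of Lemma \ref{lem; coef} but rather the intermediate product of binomial coefficients which appears in its proof, namely
$$c_{(a_1,\ldots,a_n)}=\prod_{j=1}^n \binom{N_j}{a_j}, \qquad N_j := n-j+1-\sum_{i=j+1}^n a_i.$$
Since $\sum_{i=1}^n a_i=n$, one has $N_1 = n-(n-a_1) = a_1$, so the $j=1$ factor collapses to $\binom{a_1}{a_1}=1$ and contributes no prime divisor. For $2\le j\le n$, the defining inequalities of $\mathcal{A}_n$ yield $0\le N_j\le n-j+1\le n-1$. Since every prime divisor of a binomial coefficient $\binom{N}{r}$ with $0\le r\le N$ is at most $N$, every prime divisor of $c_{(a_1,\ldots,a_n)}$ is at most $n-1$ and hence divides some integer in $\{1,\ldots,n-1\}$.

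The main subtlety is the trivialization of the $j=1$ factor: without this observation one would only obtain the weaker bound $N_j\le n$, which would admit a spurious prime divisor equal to $n$ (occurring precisely when $n$ is itself prime, as in the monomial $(n,0,\ldots,0)$), and the inclusion would break. Everything else is a direct combination of Lemma \ref{lem; 50} with a one-line bound on the arguments of the binomial coefficients.
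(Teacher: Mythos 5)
Your proposal is correct and follows essentially the same route as the paper: Lemma \ref{lem; 50} gives the inclusion of the primes below $n$ into the primes dividing coefficients, and for the reverse inclusion both arguments bound the numerators $n-k+1-\sum_{i=k+1}^{n}a_i$ appearing in the formula of Lemma \ref{lem; coef} by $n-1$, so that every prime dividing a coefficient divides one of these numerators. The only cosmetic difference is in how the potentially spurious prime divisor $n$ is ruled out: the paper sets aside the single monomial $(n,0,\ldots,0)$, whose coefficient is $1$, and uses $\sum_{i=2}^{n}a_i\geq 1$ to bound the $k=1$ numerator, whereas you observe that the first binomial factor is identically $\binom{a_1}{a_1}=1$ and bound only the factors with $j\geq 2$.
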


\begin{proof}
The inclusion ``$\subseteq$" follows immediately from Lemma \ref{lem; 50}. In the other direction, recall that $c_{(n,0,\ldots,0)}=1$. Thus, it suffices to consider $(a_1,\ldots,a_n)\in\mathcal{A}_n$ such that $\sum_{i=2}^na_i \geq 1$. It follows that for every $1\leq k\leq n-1$ it holds $n-k+1-\sum_{i=k+1}^na_i\leq n-1$. The assertion follows now from Lemma \ref{lem; coef}.
\end{proof}

In Remark \ref{rem; 999} we shall use the following result to lower bound the maximal coefficients of $p_n, n\in\N$. Additionally, it provides an interpretation of OEIS A009999.

\begin{lemma}\label{lem; 521}
For $1\leq j\leq n$ it holds $$c_{(j,\underbrace{1,\ldots,1}_{n-j},0,\ldots,0)}=j^{n-j}.$$
\end{lemma}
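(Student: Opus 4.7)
My plan is a direct computation using the explicit formula from Lemma \ref{lem; coef} applied to the monomial $(a_1,\ldots,a_n)=(j,\underbrace{1,\ldots,1}_{n-j},0,\ldots,0)$. The idea is that along this particular monomial, the numerator $n-k+1-\sum_{i=k+1}^n a_i$ is constant (equal to $j$) on a long stretch of indices, so the product telescopes nicely.

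I would split the range $1\le k\le n-1$ of the product into three blocks. First, for $k=1$, the tail sum is $\sum_{i=2}^n a_i=n-j$, so the numerator equals $n-(n-j)=j$ and the denominator is $j!$, contributing $j/j!$. Second, for $2\le k\le n-j+1$, one has $a_k=1$ (so $a_k!=1$) and the tail sum $\sum_{i=k+1}^n a_i$ equals the number of remaining ones, namely $n-j+1-k$; thus the numerator becomes $n-k+1-(n-j+1-k)=j$, a constant. This block has $n-j$ factors and contributes $j^{n-j}$. Third, for $n-j+2\le k\le n-1$, we have $a_k=0$ and the tail sum is $0$, so the numerator is $n-k+1$; as $k$ runs over this block the numerator runs over $j-1,j-2,\ldots,2$, contributing $(j-1)!$.

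Multiplying the three blocks yields
\[
c_{(j,1,\ldots,1,0,\ldots,0)}=\frac{j}{j!}\cdot j^{n-j}\cdot (j-1)! = j^{n-j},
\]
which is the claim. The only step that requires care is the second block: one must correctly count that it contains exactly $n-j$ values of $k$ and verify the index arithmetic so that the contribution is indeed $j^{n-j}$ rather than $j^{n-j\pm 1}$; boundary cases such as $j=n$ (where the middle and right blocks are empty and the left block alone yields $n/n!\cdot(n-1)!=1=n^0$) and $j=1$ (where the right block is empty and the formula collapses to $1^{n-1}=1$) should also be briefly checked. Apart from this indexing bookkeeping, no genuine difficulty arises.
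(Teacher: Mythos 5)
Your proposal is correct and follows essentially the same route as the paper: a direct evaluation of the product formula from Lemma \ref{lem; coef}, splitting the index range into a stretch where the numerator is constantly $j$ and a tail where it runs down through $j-1,\ldots,2$ (the paper merges your first two blocks, absorbing the $k=1$ factor into the run of $j$'s, but the bookkeeping is identical). The index arithmetic and the edge cases $j=1$ and $j=n$ check out as you describe.
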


\begin{proof}
It holds 
\begin{align}
c_{(j,1,\ldots,1,0,\ldots,0)}=&\frac{1}{j!}\prod_{k=1}^{n-1}\left(n-k+1-\sum_{i=k+1}^{n}a_{i}\right)\nonumber\\=&\frac{1}{j!}\prod_{k=1}^{n-j}\left(n-k+1-(n-j-k+1)\right)\prod_{k=n-j+1}^{n-1}\left(n-k+1\right)\nonumber\\=&j^{n-j}.
\end{align}
\end{proof}

\begin{lemma}\label{lem; 5}
It holds $\sum_{(a_1,\ldots,a_n)\in \mathcal{A}_n} c_{(a_1,\ldots,a_n)}= n!$.
\end{lemma}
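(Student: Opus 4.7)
The plan is to evaluate $p_n$ at the point $x_1=x_2=\cdots=x_n=1$. On the one hand, after expanding $p_n$ and combining like terms, substituting $1$ for every indeterminate kills all the monomial structure and yields exactly the sum of the coefficients:
\[
p_n(1,\ldots,1) = \sum_{(a_1,\ldots,a_n)\in\mathcal{A}_n} c_{(a_1,\ldots,a_n)}\cdot 1^{a_1}\cdots 1^{a_n} = \sum_{(a_1,\ldots,a_n)\in\mathcal{A}_n} c_{(a_1,\ldots,a_n)}.
\]
On the other hand, evaluating the unexpanded product is trivial, since the $k$th factor $(x_1+x_2+\cdots+x_k)$ becomes $k$, so
\[
p_n(1,\ldots,1) = \prod_{k=1}^{n} k = n!.
\]
Comparing the two expressions gives the identity. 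There is no real obstacle here; the lemma is essentially the observation that $n! = 1\cdot 2\cdots n$ is the value of $p_n$ at the all-ones vector, and no appeal to the explicit formula of Lemma \ref{lem; coef} or to the bijections of Lemma \ref{lem; 1} is needed.
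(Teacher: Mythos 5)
Your proof is correct and is exactly the paper's argument: specialize $(x_1,\ldots,x_n)\mapsto(1,\ldots,1)$, so the expanded form gives the sum of the coefficients while the product form gives $1\cdot 2\cdots n=n!$. You have merely spelled out the details that the paper leaves implicit.
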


\begin{proof}
The assertion follows immediately from specializing $(x_1,\ldots,x_n)\mapsto(1,\ldots,1)$ in the definition of $p_n$.
\end{proof}

In the following lemma we calculate the sum of the coefficients of the monomials of $p_n$ that contain $x_i$:

\begin{lemma}\label{lem; 123}
Let $1\leq i\leq n$. Then $$\sum_{\substack{(a_1,\ldots,a_n)\in \mathcal{A}_n \\ a_i>0}}c_{(a_1,\ldots,a_n)}=(n-i+1)(n-1)!.$$ 
\end{lemma}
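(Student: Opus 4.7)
The plan is to compute the complementary sum $\sum_{a_i=0} c_{(a_1,\ldots,a_n)}$ by a suitable specialization of the polynomial $p_n$, and then subtract from the total $n!$ provided by Lemma \ref{lem; 5}.

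First I would observe that for any $1 \leq i \leq n$, evaluating $p_n$ at $x_i = 0$ and $x_j = 1$ for all $j \neq i$ kills precisely those monomials that contain $x_i$, leaving
\[
p_n(1,\ldots,1,\underbrace{0}_{i},1,\ldots,1) \;=\; \sum_{\substack{(a_1,\ldots,a_n)\in\mathcal{A}_n \\ a_i=0}} c_{(a_1,\ldots,a_n)}.
\]
Next I would evaluate the left-hand side factor by factor in the product
\[
p_n = \prod_{k=1}^{n}\bigl(x_1+x_2+\cdots+x_k\bigr).
\]
Under this substitution the $k$th factor equals $k$ when $k < i$ (since $x_i$ does not appear in it) and equals $k-1$ when $k \geq i$ (since one of the summands, namely $x_i$, has been zeroed out). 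Therefore
\[
p_n(1,\ldots,1,0,1,\ldots,1) \;=\; \Bigl(\prod_{k=1}^{i-1}k\Bigr)\Bigl(\prod_{k=i}^{n}(k-1)\Bigr) \;=\; (i-1)\,(n-1)!,
\]
where the identity holds for $i\geq 2$ by telescoping $(i-1)!\cdot (n-1)!/(i-2)!$, and also for $i=1$ since then the second product contains the factor $k-1=0$ and both sides vanish (also consistent with the fact, noted in the proof of Lemma \ref{lem; 1}, that every monomial of $p_n$ satisfies $a_1\geq 1$).

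Finally, combining this with Lemma \ref{lem; 5} I would conclude
\[
\sum_{\substack{(a_1,\ldots,a_n)\in\mathcal{A}_n\\ a_i>0}} c_{(a_1,\ldots,a_n)} \;=\; n! - (i-1)(n-1)! \;=\; (n-i+1)(n-1)!,
\]
which is the claim. There is no real obstacle here: the only subtlety is making sure the endpoint case $i=1$ is handled, but the telescoping product formula degenerates gracefully, so the argument is a one-line substitution followed by a short arithmetic simplification.
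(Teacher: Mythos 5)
Your proposal is correct and follows essentially the same route as the paper: specialize $p_n$ at $x_i=0$ and $x_j=1$ for $j\neq i$ to get the complementary sum $(i-1)(n-1)!$, then subtract from the total $n!$ given by Lemma \ref{lem; 5}. The explicit check of the endpoint case $i=1$ is a nice touch but not a substantive difference.
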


\begin{proof}
Specializing, $f_n(x_i):=p_n(1,\ldots,1,x_i,1,\ldots,1)$ is a (univariate) polynomial in $x_i$ whose free coefficient $r$ is the sum of the coefficients corresponding to the monomials that do not contain $x_i$. Thus, \begin{align}r=&f_n(0)\nonumber \\=&p_n(1,\ldots,1,x_i,1,\ldots,1)|_{x_i=0}\nonumber\\=&p_n(1,\ldots,1,0,1,\ldots,1)\nonumber\\=&1\cdot2\cdots(i-1)(i-1)i\cdots(n-1)\nonumber\\=&(i-1)(n-1)!.\label{eq; 56}\end{align} It follows that

\begin{align}
\sum_{\substack{(a_1,\ldots,a_n)\in \mathcal{A}_n \\ a_i>0}}c_{(a_1,\ldots,a_n)} =&\sum_{(a_1,\ldots,a_n)\in \mathcal{A}_n}c_{(a_1,\ldots,a_n)} -\sum_{\substack{(a_1,\ldots,a_n)\in \mathcal{A}_n \\ a_i=0}}c_{(a_1,\ldots,a_n)}\nonumber\\
=&n!-r=(n-i+1)(n-1)!\nonumber
\end{align}
where the second equality is due to Lemma \ref{lem; 5} and (\ref{eq; 56}).
\end{proof}

\begin{remark}
Lemma \ref{lem; 123} provides an additional interpretation of the following well known sequences:
\begin{center}
\begin{tabular}{ |c|c| } 
 \hline
 \textbf{The sequence} & \textbf{OEIS} \\ 
 \hline
 $n!$ & A000142 \\ 
 \hline
 $(n-1)(n-1)!$ & A001563\\
  \hline
 $(n-2)(n-1)!$ & A062119\\
 \hline
 $(n-3)(n-1)!$ & A052571\\
 \hline
\end{tabular}
\end{center}
\end{remark}

Combining Lemma \ref{lem; 1} together with Lemma \ref{lem; 5} we immediately obtain

\begin{corollary}\label{cor; 1}
The average of the coefficients of $p_n$ is $\frac{n!}{C_n}$.
\end{corollary}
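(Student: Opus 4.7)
The statement is essentially an immediate consequence of the two lemmas cited just before it, so the proof plan is extremely short. My plan is to recall the definition of the arithmetic mean: the average of the coefficients of $p_n$ is, by definition, the sum of all coefficients divided by the total number of coefficients.

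First I would invoke Lemma \ref{lem; 5}, which gives the numerator: the sum $\sum_{(a_1,\ldots,a_n)\in\mathcal{A}_n} c_{(a_1,\ldots,a_n)}$ equals $n!$, obtained by the specialization $x_1=\cdots=x_n=1$ in the definition of $p_n$. Then I would invoke Lemma \ref{lem; 1}, which supplies the denominator: the number of distinct monomials is $|\mathcal{P}_n|=|\mathcal{A}_n|=C_n$. Dividing the two yields $n!/C_n$.

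There is no real obstacle here; the only thing worth making explicit is that ``average of the coefficients'' refers to the arithmetic mean over the $C_n$ distinct monomials (after combining like terms), which is consistent with the convention fixed at the beginning of Section 2. Thus the entire proof reduces to the single display
\[
\frac{1}{|\mathcal{A}_n|}\sum_{(a_1,\ldots,a_n)\in\mathcal{A}_n} c_{(a_1,\ldots,a_n)} \;=\; \frac{n!}{C_n},
\]
with the numerator justified by Lemma \ref{lem; 5} and the denominator by Lemma \ref{lem; 1}. No additional combinatorial or algebraic work is required.
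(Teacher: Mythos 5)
Your proof is correct and is exactly the paper's argument: the corollary is obtained immediately by dividing the total coefficient sum $n!$ from Lemma \ref{lem; 5} by the monomial count $C_n$ from Lemma \ref{lem; 1}. Nothing further is needed.
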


\begin{remark}
The sequence $\left(\frac{n!}{C_n}\right)_{n\in\N}$ is rational and the sequences of the corresponding numerators and denominators are OEIS A144187 and OEIS A144186, respectively. Corollary \ref{cor; 1} provides an additional interpretation of these sequences that correspond to the denominator and enumerator of the series expansion of the EGF for the Catalan numbers, respectively.
\end{remark}

In the following lemma we calculate the sum of the coefficients of $p_n$ whose corresponding monomials have $x_i$ as a variable with maximal index. This result seems to provides the first interpretation of OEIS A299504. These monomials are being used in the proof of \cite[Lemma 3.6]{fried2021restrictiveness}. 

\begin{lemma}
Let $1\leq i\leq n$. Then $$\sum_{\substack{(a_1,\ldots,a_n)\in \mathcal{A}_n \\ a_{i+1}=\cdots=a_n=0}}c_{(a_1,\ldots,a_n)}=i!i^{n-i}.$$ 
\end{lemma}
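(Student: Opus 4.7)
The plan is to use a specialization argument analogous to the proof of Lemma \ref{lem; 123}. Observe that in the expansion of $p_n$, a monomial $x_1^{a_1}\cdots x_n^{a_n}$ satisfies $a_{i+1}=\cdots=a_n=0$ if and only if it survives the substitution $x_{i+1}=\cdots=x_n=0$. Consequently, the polynomial $p_n(x_1,\ldots,x_i,0,\ldots,0)$ is exactly the sum, with the correct coefficients, of the monomials we wish to count. Therefore, further setting $x_1=\cdots=x_i=1$ produces precisely the desired sum.

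It then remains to evaluate $p_n(1,\ldots,1,0,\ldots,0)$, where the first $i$ entries are $1$ and the remaining $n-i$ entries are $0$. For this, I would inspect the $k$-th factor $x_1+\cdots+x_k$ of $p_n$ under this substitution: when $k\leq i$ the factor equals $k$, and when $k>i$ the factor equals $i$ (since only the first $i$ variables contribute). Taking the product yields
\[
\prod_{k=1}^{i}k \cdot \prod_{k=i+1}^{n}i = i!\,i^{n-i},
\]
which is the asserted value.

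There is no real obstacle here: the argument is a one-line specialization plus an elementary evaluation, and it mirrors the technique already used in Lemma \ref{lem; 5} and Lemma \ref{lem; 123}. The only thing to be careful about is bookkeeping in the product, namely noticing that the factors split cleanly at the index $k=i$ into a ``rising'' piece contributing $i!$ and a ``constant'' piece contributing $i^{n-i}$.
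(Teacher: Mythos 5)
Your proof is correct and uses exactly the same argument as the paper: specializing $p_n$ at $x_1=\cdots=x_i=1$ and $x_{i+1}=\cdots=x_n=0$, then evaluating the product factor by factor to get $i!\,i^{n-i}$. Your write-up just spells out the details that the paper's one-line proof leaves implicit.
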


\begin{proof}
Clearly, 
$$\sum_{\substack{(a_1,\ldots,a_n)\in \mathcal{A}_n \\ a_{i+1}=\cdots=a_n=0}}=p_n(\overbrace{1,\ldots,1}^{i \textnormal{ times }} ,\overbrace{0,\ldots,0}^{n-i\textnormal{ times}}) = i!i^{n-i}.$$
\end{proof}

\subsection{The maximal coefficients of $p_n, n\in\N$}

The multinomial theorem (e.g. \cite[Theorem 3.9]{detemple2014combinatorial}) states that for every $m\in\N$ it holds \begin{equation}\label{eq; 6}
(x_1+\cdots+x_n)^m=\sum_{}\binom{m}{k_1,\ldots,k_n}x_1^{k_1}\cdots x_n^{k_n}\end{equation} where the sum is over all nonnegative integers $k_1,\ldots,k_n$ such that $k_1 +\cdots+k_n = m$. It seems to be folklore that the maximal coefficient on the right-hand side of (\ref{eq; 6}) is obtained whenever $r$ of the $k_1,\ldots,k_n$ are equal to $q+1$ and the rest are equal to $q$ where $m = qn + r, \;q\in\N_0,\;0\leq r<n$. In this section we address the analogue problem of finding the maximal coefficients of $p_n, n\in\N$, the sequence of which we denote by $(m_n)_{n\in\N}$ (OEIS A349404). For example, $$(m_n)_{n\in\N} = 1, 1, 2, 4, 9, 27, 96, 384, 1536,\ldots$$ The quotients of consecutive elements of $(m_n)_{n\in\N}$ exhibit a nontrivial pattern and the induced sequence is denoted by $(q_n)_{n\in\N}$, i.e. $q_n = \frac{m_{n+1}}{m_n}, n\in\N$. Table \ref{table:1} lists $(m_n)_{n=1}^{29}$ and $(q_n)_{n=1}^{28}$ which were established by brute force. The last column in the table lists a monomial of $p_n$ whose coefficient is $m_n$. Notice that such a monomial is, in general, not unique and in Table \ref{table:2} we list all other monomials of $p_1,\ldots,p_{29}$ whose coefficients are maximal.

\bigskip

The problem of finding a maximal coefficient of $p_n$ may be formulated as a combinatorial optimization problem as follows: 
\begin{align}
    \text{maximize } & \prod_{k=1}^{n-1}\frac{n-k+1-\sum_{i=k+1}^{n}a_{i}}{a_{k}!}\nonumber\\
    \text{subject to } & (a_1,\ldots,a_n)\in\mathcal{A}_n.\nonumber
\end{align} This problem is intractable already for small values of $n$ since $C_n$ is asymptotically  $\frac{4^n}{\sqrt{\pi}n^{3/2}}$ (e.g. \cite[Problem 12-4]{cormen2009introduction}). The following two lemmas reduce the complexity of the problem. More precisely, in Lemma \ref{lem; 66} we show that it suffices to perform the search over elements of $(a_1,\ldots,a_n)\in\mathcal{A}_n$ such that $a_1\geq a_2 \geq\cdots \geq a_n$ and in Lemma \ref{lem; 67} we show that the search may be performed over $(a_1,\ldots,a_n) \in\mathcal{A}_n$ with consecutive differences bounded by $1$, i.e., $a_i-a_{i+1}\leq 1$ for every $1\leq i\leq n-1$. In Lemma \ref{lem; 199} we show that the subset $\mathcal{M}_n$ of $\mathcal{A}_n$ consisting of elements that have both properties has the same cardinality as the set of all partitions of $n$ with distinct parts, reducing the complexity of the problem to the order of $\frac{3^{3/4}}{12n^{3/4}}e^{\pi\sqrt{n/3}}$.

\begin{lemma}\label{lem; 66}
Let $(a_1,\ldots,a_n)\in \mathcal{A}_n$ such that for some $1\leq i\leq n-1$ it holds $a_i< a_{i+1}$. Let $(a'_1,\ldots,a'_n)$ be defined by $$a'_k = \begin{cases} a_k & k\neq i, i+1\\
a_{i+1} & k = i\\
a_i & k=i+1,
\end{cases} \;\;\;\forall 1\leq k\leq n.$$ Then $(a'_1,\ldots,a'_n)\in \mathcal{A}_n$ and $c_{(a'_1,\ldots,a'_n)}> c_{(a_1,\ldots,a_n)}$.
\end{lemma}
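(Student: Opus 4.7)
The plan is to prove the two claims separately: membership of the swapped tuple in $\mathcal{A}_n$ and the strict inequality of coefficients. Both will reduce to tracking exactly which factors of the closed formula from Lemma~\ref{lem; coef} change under the swap.

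First I would verify that $(a'_1,\ldots,a'_n)\in\mathcal{A}_n$. The total sum is preserved because a swap doesn't change it. For the partial-sum constraints $\sum_{j=k+1}^n a'_j\leq n-k$, observe that for $k<i$ or $k\geq i+1$ the multiset $\{a'_j : j>k\}$ coincides with $\{a_j : j>k\}$, so the constraints are inherited from $(a_1,\ldots,a_n)$. The only ``new'' constraint is at $k=i$, where $\sum_{j=i+1}^n a'_j = a_i+\sum_{j=i+2}^n a_j$, which is strictly smaller than $\sum_{j=i+1}^n a_j$ by $a_{i+1}-a_i>0$, so the inequality is still satisfied.

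Next I would apply Lemma~\ref{lem; coef} and compare the two products factor by factor. Writing $N_k = n-k+1-\sum_{j=k+1}^n a_j$ and $N'_k$ for the analogous quantity built from $(a'_1,\ldots,a'_n)$, one sees that $N'_k = N_k$ for every $k\neq i$, because the tail sums $\sum_{j=k+1}^n a'_j$ agree with $\sum_{j=k+1}^n a_j$ in all those cases. At $k=i$, however, $N'_i = N_i + (a_{i+1}-a_i)$. On the denominator side, $a'_k! = a_k!$ for $k\notin\{i,i+1\}$, while $a'_i! = a_{i+1}!$ and $a'_{i+1}! = a_i!$, so the pair $a_i!\,a_{i+1}!$ appears in the product in either order and therefore contributes identically in both cases. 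Thus all factors cancel in the ratio except the numerator at $k=i$, giving
\[
\frac{c_{(a'_1,\ldots,a'_n)}}{c_{(a_1,\ldots,a_n)}}
= \frac{N_i + (a_{i+1}-a_i)}{N_i}
= 1 + \frac{a_{i+1}-a_i}{N_i}.
\]

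Finally I would check that this ratio exceeds $1$. The numerator of the correction is positive by the standing hypothesis $a_i<a_{i+1}$, and the denominator is positive because $(a_1,\ldots,a_n)\in\mathcal{A}_n$ gives $\sum_{j=i+1}^n a_j\leq n-i$, hence $N_i\geq 1$. This yields $c_{(a'_1,\ldots,a'_n)}>c_{(a_1,\ldots,a_n)}$ as required. I do not foresee any real obstacle; the result is essentially an exchange argument and the only substantive observation is that the two displaced factorials swap positions without affecting the product, so only a single numerator changes.
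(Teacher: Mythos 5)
Your proof is correct and follows essentially the same route as the paper's: both compare the two coefficients factor by factor via the closed formula of Lemma \ref{lem; coef}, observe that the factorial denominators are merely permuted while only the numerator at $k=i$ changes (by exactly $a_{i+1}-a_i$), and so reduce the strict inequality to the hypothesis $a_i<a_{i+1}$, with your write-up being if anything slightly more careful about membership in $\mathcal{A}_n$ and the positivity of $N_i$. The only cosmetic caveat is that for $i=n-1$ the denominator product $\prod_{k=1}^{n-1}a_k!$ contains $a_{n-1}!$ but not $a_n!$, so the two factorials do not literally both appear and swap places; however, $a_{n-1}<a_n\le 1$ forces $a_{n-1}!=a_n!=1$, so the denominators still agree and your conclusion (like the paper's analogous claim) is unaffected.
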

\begin{proof}
It holds
\begin{enumerate}
    \item $\prod_{k=1}^n a_k! = \prod_{k=1}^n a'_k!$,
    \item $n-k+1-\sum_{j=k+1}^{n}a_j = n-k+1-\sum_{j=k+1}^{n}a'_j$ for every $i+1\leq k\leq n$ and
    \item $n-k+1-\sum_{j=k+1}^{n}a_j = n-k+1-\sum_{j=k+1}^{n}a'_j$ for every $1\leq k\leq i-1$.
\end{enumerate} It follows from Lemma \ref{lem; coef} that \begin{align}c_{(a'_1,\ldots,a'_n)}> c_{(a_1,\ldots,a_n)}\iff& n-(i+1)+1-\sum_{j=i+1}^{n}a'_j > n-(i+1)+1-\sum_{j=i+1}^{n}a_j\nonumber\\\iff &-a'_{i+1}-\sum_{j=i+2}^{n}a'_j >-\sum_{j=i+1}^{n}a_j\nonumber\\ \iff &a_i< a_{i+1}.\nonumber\end{align}
\end{proof}

\begin{lemma}\label{lem; 67}
Let $(a_1,\ldots,a_n)\in \mathcal{A}_n$ such that for some $1\leq i\leq n-1$ it holds $a_i> a_{i+1} + 1$. Let $(a'_1,\ldots,a'_n)$ be defined by $$a'_k = \begin{cases} a_k & k\neq i, i+1\\
a_i - 1 & k = i\\
a_{i+1}+1 & k=i+1,
\end{cases} \;\;\;\forall 1\leq k\leq n.$$ Then $(a'_1,\ldots,a'_n)\in \mathcal{A}_n$ and $c_{(a'_1,\ldots,a'_n)}\geq c_{(a_1,\ldots,\mathcal{A}_n)}$.
\end{lemma}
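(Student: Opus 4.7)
First I would verify that $(a'_1,\ldots,a'_n)\in\mathcal{A}_n$. The sum is preserved, since we moved one unit from position $i$ to position $i+1$. Writing $S_k=\sum_{j=k+1}^n a_j$ and $S'_k=\sum_{j=k+1}^n a'_j$, one checks directly that $S'_k=S_k$ for every $k\neq i$ (the two changes cancel whenever both indices $i,i+1$ lie in the summation range), while $S'_i=S_i+1$. Thus the only nontrivial constraint to check is $S_i+1\leq n-i$. For $i\geq 2$, the $\mathcal{A}_n$-constraint at $k=i-1$ gives $a_i+S_i\leq n-i+1$, and combined with $a_i\geq a_{i+1}+2\geq 2$ this yields $S_i\leq n-i-1$. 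For $i=1$, the identity $\sum_{j=1}^n a_j=n$ together with $a_1\geq 2$ does the job.

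Next I would plug into Lemma \ref{lem; coef}. Since $S'_k$ differs from $S_k$ only at $k=i$, the factor $n-k+1-S_k$ changes only at $k=i$ (decreasing by $1$). The only factorials affected are $a_i!\to(a_i-1)!$ (contributing a factor $a_i$) and, provided $i+1\leq n-1$, $a_{i+1}!\to(a_{i+1}+1)!$ (contributing a factor $1/(a_{i+1}+1)$). In the generic case $i\leq n-2$ this gives
$$\frac{c_{(a'_1,\ldots,a'_n)}}{c_{(a_1,\ldots,a_n)}}=\frac{a_i(n-i-S_i)}{(a_{i+1}+1)(n-i+1-S_i)}.$$
Setting $M=n-i-S_i$ and $d=a_i-a_{i+1}-1\geq 1$, the inequality $c_{(a')}\geq c_{(a)}$ is equivalent to $dM\geq a_{i+1}+1$. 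The constraint at $k=i-1$ (or $\sum a_j=n$ when $i=1$) yields $M\geq a_i-1=a_{i+1}+d$, and hence $dM\geq d(a_{i+1}+d)=d\, a_{i+1}+d^2\geq a_{i+1}+1$, since $d\geq 1$ and $a_{i+1}\geq 0$.

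The boundary case $i=n-1$ requires a separate word, since $a_n!$ does not appear in the product of Lemma \ref{lem; coef}, so the factor $1/(a_{i+1}+1)$ is missing; the ratio reduces to $a_{n-1}(1-a_n)/(2-a_n)$. Here the hypothesis $a_{n-1}>a_n+1$ combined with the $\mathcal{A}_n$-constraint at $k=n-2$ (namely $a_{n-1}+a_n\leq 2$) forces $a_n=0$ and $a_{n-1}\geq 2$, so the ratio becomes $a_{n-1}/2\geq 1$. The step I expect to be the main nuisance is precisely this bookkeeping at the endpoints $i=1$ and $i=n-1$, where the product in Lemma \ref{lem; coef} ``runs out'' of an index and the missing $\mathcal{A}_n$-constraint must be replaced by either the overall sum identity $\sum a_j=n$ or the next-available constraint; once these boundary substitutions are made, the uniform bound $M\geq a_i-1$ drives the whole argument.
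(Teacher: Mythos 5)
Your proposal is correct and follows essentially the same route as the paper: compute the ratio $c_{(a')}/c_{(a)}$ from Lemma \ref{lem; coef} (only the $k=i$ numerator and the two factorials change) and reduce the desired inequality to the defining constraint $\sum_{j=i}^{n}a_j\leq n-i+1$ of $\mathcal{A}_n$. You are in fact somewhat more careful than the paper, which omits the explicit verification that $(a'_1,\ldots,a'_n)\in\mathcal{A}_n$ and the endpoint bookkeeping; note also that the $i=n-1$ case is automatically harmless since $a_n,a'_n\in\{0,1\}$ forces the relevant factorials to equal $1$.
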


\begin{proof}
Arguing as in the proof of Lemma \ref{lem; 66}, we have
\begin{align}c_{(a'_1,\ldots,a'_n)}\geq c_{(a_1,\ldots,a_n)}\iff & \frac{n-i-\sum_{j=i+1}^{n}a{}_{j}}{a_{i+1}+1}\geq\frac{n-i-\sum_{j=i+1}^{n}a{}_{j}+1}{a_{i}}\nonumber \\\iff &n-i-\sum_{j=i+1}^{n}a{}_{j}\geq\frac{a_{i+1}+1}{a_{i}-(a_{i+1}+1)}\nonumber\\\iff&n-(i-1)-\sum_{j=i+1}^{n}a{}_{j}\geq\frac{a_i}{a_{i}-(a_{i+1}+1)}\nonumber. \end{align}
Since $a_{i}-(a_{i+1}+1)\geq 1$ it suffices to show that $n-(i-1)-\sum_{j=i+1}^{n}a{}_{j}\geq a_i$ or, equivalently, that $\sum_{j=i}^{n}a{}_{j}\leq n-(i-1)$ but this holds by the definition of $\mathcal{A}_n$.
\end{proof}

Combining Lemma \ref{lem; 66} and Lemma \ref{lem; 67} we obtain

\begin{theorem}\label{thm; 1}
A maximal coefficient of $p_n$ is attained at a 
monomial belonging to $\mathcal{M}_n$ where $$\mathcal{M}_n=\{(a_1,\ldots,a_n)\in \mathcal{A}_n\;|\;a_{i+1}\leq a_i\leq a_{i+1}+1, \;\forall 1\leq i\leq n-1\}.$$ 
\end{theorem}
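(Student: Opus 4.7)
The plan is to turn Lemmas \ref{lem; 66} and \ref{lem; 67} into an algorithm that transforms any $(a_1,\ldots,a_n)\in\mathcal{A}_n$ into an element of $\mathcal{M}_n$ without decreasing the associated coefficient. Applied to a monomial that attains the maximum, this yields a maximizer inside $\mathcal{M}_n$, proving the theorem.

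The first phase applies Lemma \ref{lem; 66} repeatedly to any adjacent pair with $a_i<a_{i+1}$ (i.e.\ performs bubble sort). Each swap strictly increases the coefficient, and the quantity $\sum_{k=1}^{n}k\,a_k$ is a nonnegative integer that strictly decreases at every step (the change equals $a_i-a_{i+1}<0$), so the phase terminates at a weakly decreasing sequence $a_1\geq a_2\geq\cdots\geq a_n$ with a strictly larger coefficient than the starting one.

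The second phase starts from that weakly decreasing sequence and repeatedly applies Lemma \ref{lem; 67} at any index $i$ with $a_i>a_{i+1}+1$. By the lemma the coefficient does not decrease. The key compatibility observation is that the operation preserves weak monotonicity: the hypothesis $a_i\geq a_{i+1}+2$ gives $a_i-1\geq a_{i+1}+1$ at the active pair, while the inequalities $a_{i-1}\geq a_i\geq a_i-1$ and $a_{i+1}+1\geq a_{i+1}\geq a_{i+2}$ take care of the two neighbouring pairs. For termination, $\sum_{k=1}^{n}a_k^2$ changes by $-2(a_i-a_{i+1}-1)\leq -2$ and is bounded below, so the phase ends with every consecutive difference in $\{0,1\}$; by definition the resulting sequence lies in $\mathcal{M}_n$.

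The main obstacle, and what dictates the two-phase ordering, is exactly this compatibility check: one must verify that applying Lemma \ref{lem; 67} to a sorted sequence cannot reintroduce the inversions that phase one eliminated, for otherwise one would need to interleave the two lemmas and find a single monovariant for the combined dynamics. Once the compatibility is established, the algorithm terminates inside $\mathcal{M}_n$ with coefficient at least that of the input, and starting it from any maximizer of $p_n$ yields the claim.
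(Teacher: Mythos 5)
Your proof is correct and follows the same route as the paper, which simply states that the theorem follows by ``combining Lemma \ref{lem; 66} and Lemma \ref{lem; 67}'' without further detail. Your two-phase argument, with the explicit monovariants $\sum_k k\,a_k$ and $\sum_k a_k^2$ and the check that Lemma \ref{lem; 67} preserves weak monotonicity, supplies precisely the termination and compatibility details that the paper leaves implicit.
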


\begin{lemma}\label{lem; 199}
\begin{enumerate}
    \item There is a bijection between the set $$\{(a_1,\ldots,a_n)\in\mathcal{A}_n\;|\;a_1\geq a_2\geq\cdots\geq a_n\}$$ considered in Lemma \ref{lem; 66} and the set of all partitions of $n$. In particular, Lemma \ref{lem; 66} reduces the complexity of the problem to the order of $\frac{1}{4n\sqrt{3}}e^{\pi\sqrt{2n/3}}$.
    \item There is a bijection between $\mathcal{M}_n$ and the set of all partitions of $n$ with distinct parts. In particular, Theorem \ref{thm; 1} reduces the complexity of the problem to the order of $\frac{3^{3/4}}{12n^{3/4}}e^{\pi\sqrt{n/3}}$.
\end{enumerate}
\end{lemma}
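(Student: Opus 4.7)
Both parts will be handled by viewing a weakly-decreasing element $(a_1,\ldots,a_n)\in \mathcal{A}_n$ as a partition of $n$ into at most $n$ nonnegative parts, some possibly zero. My plan is first to dispense with the $\mathcal{A}_n$ constraint in part 1 by showing it is redundant once monotonicity is imposed, and then to handle part 2 via a partition-conjugation bijection.

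\textbf{Part 1.} I will show that every weakly-decreasing $(a_1,\ldots,a_n)\in\N_0^n$ summing to $n$ automatically lies in $\mathcal{A}_n$. Arguing by contradiction, if $\sum_{i=k+1}^n a_i\geq n-k+1$ for some $k\geq 1$, then $a_{k+1}$, being the largest of the last $n-k$ entries, must satisfy $a_{k+1}\geq 2$; by monotonicity, $a_1,\ldots,a_{k+1}$ are then all $\geq 2$, forcing $\sum_{i=1}^n a_i\geq 2k+(n-k+1)=n+k+1>n$, a contradiction. Stripping trailing zeros therefore defines a bijection from $\{(a_1,\ldots,a_n)\in \mathcal{A}_n : a_1\geq\cdots\geq a_n\}$ onto the set of all partitions of $n$, whose cardinality has the Hardy--Ramanujan asymptotic $\tfrac{1}{4n\sqrt{3}}e^{\pi\sqrt{2n/3}}$.

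\textbf{Part 2.} For $\mathcal{M}_n$ I will use partition conjugation. The step-size constraint $a_i-a_{i+1}\leq 1$ together with the observation that $a_n\in\{0,1\}$ (else all $a_i\geq 2$ and $\sum a_i\geq 2n>n$) forces the smallest nonzero entry of any $(a_1,\ldots,a_n)\in \mathcal{M}_n$ to equal $1$: if $a_n=0$, the constraint $a_{n-1}\leq a_n+1=1$ propagates down until the last nonzero entry, which must therefore also equal $1$. Let $\lambda$ be the partition obtained from $(a_1,\ldots,a_n)$ by discarding zeros; then $\lambda$ has consecutive differences in $\{0,1\}$ and smallest part $1$. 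I now invoke the standard identity that the multiplicity of the value $v$ in the conjugate partition $\lambda'$ equals $\lambda_v-\lambda_{v+1}$ (with the convention $\lambda_{\ell(\lambda)+1}=0$); the two properties of $\lambda$ translate, respectively, into every multiplicity in $\lambda'$ being at most $1$ and every value from $1$ to $\lambda_1$ being attained, so $\lambda'$ is a partition of $n$ with distinct parts. Conversely, given any partition $\mu$ of $n$ into distinct parts, the conjugate $\mu'$ has consecutive differences $\leq 1$, smallest part $1$, and length $\mu_1\leq n$, so padding it with trailing zeros to length $n$ produces an element of $\mathcal{M}_n$, its membership in $\mathcal{A}_n$ being guaranteed by part 1. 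The two maps are mutually inverse.

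\textbf{Complexity and anticipated difficulty.} The complexity estimate for part 2 will then follow from the classical asymptotic $\tfrac{3^{3/4}}{12n^{3/4}}e^{\pi\sqrt{n/3}}$ for the number of partitions of $n$ into distinct parts. The argument is mostly bookkeeping; the one step where I expect to exercise care is the interaction between the step-size condition and the trailing zeros in part 2, specifically confirming that the smallest nonzero part must equal $1$ so that conjugation produces \emph{all} values from $1$ up to the largest part (and hence a partition with distinct parts, rather than merely with bounded multiplicities).
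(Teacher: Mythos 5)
Your proof is correct and follows essentially the same route as the paper's: part 1 by stripping/padding trailing zeros and using monotonicity to check that the $\mathcal{A}_n$ constraints are automatic, part 2 by partition conjugation (the paper phrases the distinctness of the conjugate as ``$a'_k=a'_{k+1}$ would force $k$ not to be a part,'' which is the same fact as your multiplicity identity $m_v(\lambda')=\lambda_v-\lambda_{v+1}$). One small phrasing slip worth fixing: the smallest-part-$1$ condition does not translate into ``every value from $1$ to $\lambda_1$ is attained in $\lambda'$'' (that is false in general, e.g.\ $\lambda=(2,2,1)$ gives $\lambda'=(3,2)$, and it is not needed); its actual role is to bound the multiplicity of the value $\ell(\lambda)$ in $\lambda'$, namely $\lambda_{\ell(\lambda)}-\lambda_{\ell(\lambda)+1}=\lambda_{\ell(\lambda)}\leq 1$, which your step-size condition alone does not cover.
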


\begin{proof} Recall that, by definition (e.g. \cite[Definition 1.1]{andrews_1984}), a partition of $n$ is a nonincreasing sequence $a_1\geq a_2\geq \cdots \geq a_r$ of $r\in\N$ natural numbers such that $\sum_{i=1}^r a_i = n$. One also writes $(a_1,\ldots,a_r)$ for such a partition.
\begin{enumerate}
    \item Let $(a_1,\ldots,a_n)\in\mathcal{A}_n$ such that $a_1\geq a_2\geq\cdots\geq a_n$ and let $r = \max\{1\leq i\leq n\;|\; a_i>0\}$. Then $(a_1,\ldots,a_r)$ is a partition of $n$. Conversely, let $a_1,\ldots,a_r\in\N$ be a partition of $n$ with $r\in\N$ parts. Extend $(a_1,\ldots,a_r)$ with $n-r$ zeros to obtain $(a_1,\ldots,a_r, 0,\ldots,0)\in\N_0^n$. Let $1\leq l\leq r-1$ and suppose $\sum_{i=l+1}^na_i>n-l$. Since $a_r\geq 1$, by monotonicity, also $a_1,\ldots,a_{r-1}\geq 1$. Thus, $$n=\sum_{i=1}^n a_i =\sum_{i=1}^l a_i+\sum_{i=l+1}^n a_i>l + n-l=n.$$ A contradiction. This shows that $(a_1,\ldots,a_r, 0,\ldots,0)\in\mathcal{A}_n$.
    The claim regarding the complexity is due to \cite{hardy1918asymptotic}, \cite{uspensky1920asymptotic} and \cite{rademacher1938partition}.
    \item Let $(a_1,\ldots,a_n)\in\mathcal{M}_n$ and let  $r = \max\{1\leq i\leq n\;|\; a_i>0\}$. Then $(a_1,\ldots,a_r)$ is a partition of $n$ such that $a_i-a_{i+1}\leq 1$ for every $1\leq i\leq r-1$ and $a_r=1$. This means that each of the numbers $1,2,\ldots, a_1 =: m$ appears as part in $(a_1,\ldots,a_r)$. We claim that the conjugate $(a'_1,\ldots,a'_m)$ of $(a_1,\ldots,a_r)$ (cf. \cite[Definition 1.8]{andrews_1984}) has distinct parts. To see that, recall that for $1\leq i\leq m,\;$ $a'_i$ is defined to be the number of parts of $(a_1,\ldots,a_r)$ that are $\geq i$. Suppose $a'_k = a'_{k+1}$ for some $1\leq k\leq m-1$. This means that every $1\leq i\leq r$ if $a_i\geq k$ then also $a_i\geq k+1$. Thus, $k$ cannot be a part of $(a_1,\ldots,a_r)$, a contradiction. 
    
    Conversely, let $(a'_1,\ldots,a'_m)$ be a partition of $n$ with $1\leq m\leq n$ distinct parts and let $(a_1,\ldots,a_r)$ be its conjugate where $1\leq r\leq n$. Suppose that for some $1\leq i\leq m-1$ it holds $a_i\geq a_{i+1}+2$. Thus, there are $1\leq k,l \leq m$ such that $k\neq l$ and $i\leq a'_k,a'_l<i+1$. It follows that $a'_k=i=a'_l$, a contradiction. It remains to show that $a_r=1$. Since the parts of $(a'_1,\ldots,a'_m)$ are distinct, $a'_1>a'_i$ for every $2\leq i\leq m$. Thus, $r = a'_1$ and $a_r=1$ since $a_r$ is the number of parts of $(a'_1,\ldots,a'_m)$ that are $\geq a'_1$.
    
    The claim regarding the complexity may be found, for example, in \cite[(37) on p. 45]{flajolet2009analytic}.
\end{enumerate}
\end{proof}

\begin{remark}
The idea of the proof of the second statement in Lemma \ref{lem; 199} is due to J. Grahl and F. T. Adams-Watters (cf. comments to OEIS A000009). 
\end{remark}

\bigskip

Despite the complexity reduction described above, the problem remains intractable. We provide a simple algorithm 
that for a prescribed $l\in \N$ successively generates a sequence of monomials $(r_n)_{n=1}^l$ such that $r_n\in\mathcal{A}_n$ for every $1\leq n\leq l$. In Lemma \ref{lem; 888} we show that actually $r_n\in\mathcal{M}_n$. The algorithm also returns a sequence $(s_n)_{n=1}^l$ of the corresponding coefficients. It uses a method $\textnormal{Coefficient}(\cdot)$ that upon receiving a monomial of $p_n$ as input returns its corresponding coefficient, e.g., by using the formula given by Lemma \ref{lem; coef}. We applied the algorithm with $l=100$ and Table \ref{table:3} lists the elements of $(s_n)_{n=29}^{100}$ returned by the algorithm (the first $29$ elements of $(s_n)_{n=1}^{100}$ coincide with $(m_n)_{n=1}^{29}$ that were already listed in Table \ref{table:1}). We consider the correctness of the first $29$ numbers returned by the algorithm and the preservation of the pattern that the elements of $(q_n)_{n=1}^{28}$ exhibit (this is illustrated in the third column of Table \ref{table:3}) to be a strong indication that the algorithm actually returns the true sequence $(m_n)_{n\in\N}$. Before we present the pseudocode of the algorithm, let us illustrate by an example its idea: Assume that the algorithm decided that $r_{15} = (3, 3, 2, 2, 1, 1, 1, 1, 1, 0, 0, 0, 0, 0, 0)$. First, a zero is attached at the end of $r_{15}$ and the result is denoted by $r'_{16}$. Thus, $r'_{16}=(3, 3, 2, 2, 1, 1, 1, 1, 1, 0, 0, 0, 0, 0, 0,0)$. Now, beginning at the end of $r'_{16}$, we iterate backwards and every time\footnote{Actually, every time is wasteful and we also increase the first entry by $1$ if it does not result in a gap of $2$ between the first and the second entries.} the value is about to increase, we add $1$ at this point. Those are the candidates to pick $r_{16}$ from. Thus, we get $$\scriptstyle (3, 3, 2, 2, 1, 1, 1, 1, 1, \mathbf{1}, 0, 0, 0, 0, 0, 0), \;(3, 3, 2, 2, \mathbf{2}, 1, 1, 1, 1, 0, 0, 0, 0, 0, 0, 0), \;(3, 3, \mathbf{3}, 2, 1, 1, 1, 1, 1, 0, 0, 0, 0, 0, 0, 0), \;(\mathbf{4}, 3, 2, 2, 1, 1, 1, 1, 1, 0, 0, 0, 0, 0, 0, 0).$$ For each of the candidates we calculate the corresponding coefficient. Thus, we obtain $$370594350, \;361267200, \;321126400, \;48168960.$$ We take $r_{16}$ to be the monomial with the largest coefficient. In this case, $$r_{16} = (3, 3, 2, 2, 1, 1, 1, 1, 1, 1, 0, 0, 0, 0, 0, 0).$$ Now, we repeat the procedure described above with $r_{16}$.

\bigskip

\begin{algorithm}[H]
\caption{Generating a sequence of monomials \label{IR}}
 \KwIn{The length $l$ of the desired sequence}
 \KwOut{A sequence $(r_n)_{n=1}^l$ of monomials and the sequence $(s_n)_{n=1}^l$ of their corresponding coefficients}
 $r_1 \leftarrow (1)$\\
 $s_1 \leftarrow 1$\\
 \For{$n \leftarrow 2$ \KwTo $l$}{
   $r'_n  \leftarrow r_{n-1}\cup 0$\\
   $R  \leftarrow \{\}$\\
   $S  \leftarrow \{\}$\\
   
   \For{$i \leftarrow n$ \KwTo $1$}{
   \If{$(i=n$ {\bf and} $r'_n[n] \neq r'_n[n-1])$ {\bf or}\\ $(i> 1$ {\bf and} $r'_n[i] \neq r'_n[i-1]$ {\bf and} $r'_n[i] = r'_n[i+1] )$ {\bf or}\\ $(i=1$ {\bf and} $r'_n[1] = r'_n[2])$  }{
			 $\text{temp} \leftarrow r'_n$\\
			 $\text{temp}[i] \leftarrow \text{temp}[i]+1$\\
			 $R  \leftarrow R\cup\text{temp}$\\
			 $S  \leftarrow S\cup \text{Coefficient}(\text{temp})$\\
		}
   }
   $k \leftarrow \argmax S$\\
   $r_n \leftarrow R[k]$\\
   $s_n \leftarrow S[k]$\\   
 }
 \KwRet $(r_n)_{n=1}^l$, $(s_n)_{n=1}^l$
\end{algorithm}

\begin{lemma}\label{lem; 888}
Let $l\in\N$ and let $(r_n)_{n=1}^l$ be the sequence of monomials returned by Algorithm  \ref{IR}. Then $r_n\in \mathcal{M}_n $ for every $1\leq n\leq l$. 
\end{lemma}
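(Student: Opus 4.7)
The plan is to proceed by induction on $n$. The base case is immediate since $r_1 = (1) \in \mathcal{M}_1$. For the inductive step, I assume $r_{n-1} = (a_1, \ldots, a_{n-1}) \in \mathcal{M}_{n-1}$, so its entries are weakly decreasing with consecutive differences in $\{0, 1\}$ and sum to $n-1$. A useful preliminary observation is that membership in $\mathcal{A}_{n-1}$ forces $a_{n-1} \leq 1$; hence, after appending a zero to form $r'_n = (a_1, \ldots, a_{n-1}, 0)$, the new sequence still has every consecutive difference in $\{0, 1\}$ and differs from membership in $\mathcal{A}_n$ only in its sum (which is $n-1$ instead of $n$).

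The algorithm produces $r_n$ by incrementing a single entry of $r'_n$ by $1$, so the sum becomes exactly $n$ and every entry is a non-negative integer. It therefore suffices to verify that the chosen increment preserves the inequalities $a_{i+1} \leq a_i \leq a_{i+1} + 1$ everywhere. This reduces to a short case analysis matching the three mutually exclusive clauses under which the algorithm admits a candidate position $i$. For $i = n$, the condition $r'_n[n] \neq r'_n[n-1]$ combined with $a_{n-1} \in \{0, 1\}$ forces $a_{n-1} = 1$, and incrementing the trailing $0$ to $1$ only alters the last consecutive difference, which drops from $1$ to $0$. For $1 < i < n$, the condition $r'_n[i] \neq r'_n[i-1]$ together with the inductive staircase property forces $r'_n[i-1] = r'_n[i] + 1$, while $r'_n[i] = r'_n[i+1]$ ensures equality with the next entry; incrementing $r'_n[i]$ then closes the gap at position $i - 1$ (new difference $0$) and opens a gap of exactly $1$ at position $i$, with all other differences unchanged. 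For $i = 1$, the condition $r'_n[1] = r'_n[2]$ ensures that bumping the first entry produces a new leading difference of exactly $1$ and leaves every other difference intact. In all three cases the resulting sequence lies in $\mathcal{M}_n$.

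A small side verification is needed to confirm the candidate set $R$ is never empty: letting $j$ be the largest index with $a_j > 0$, we necessarily have $a_j = 1$ (since $a_j \leq 1$ and the consecutive differences are at most $1$), and then either $j = n - 1$, in which case the $i = n$ clause is satisfied, or $j < n - 1$, in which case $i = j + 1$ satisfies the middle clause. The only conceptually delicate step is the matching itself — verifying that each algorithmic predicate corresponds \emph{precisely} to a position where incrementing preserves the staircase shape. This amounts to a direct dictionary between the ``incrementable corners'' of the staircase diagram of $r_{n-1}$ and the three disjunctive clauses inside the algorithm's if-statement, and I expect this bookkeeping to be the main (though modest) obstacle in writing out the full argument.
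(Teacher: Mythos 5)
Your proof is correct and follows essentially the same route as the paper's: induction on $n$, appending a trailing zero, and a case analysis over the three clauses of the algorithm's if-statement showing that each admissible increment preserves the staircase shape, together with a verification that the candidate set $R$ is nonempty. If anything, you are more explicit than the paper, which works out only the middle clause in detail (declaring the other two cases similar) and disposes of the nonemptiness of $R$ in one line.
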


\begin{proof}
First notice that for each $2\leq n\leq l$ the condition in the inner for loop is satisfied at least once. Indeed, if the first two conditions are not satisfied, then the third must be. Now, for $n=1$ it holds $r_1 = (1)\in \mathcal{M}_1$. Assume that $r_n=(a_1,\ldots,a_n)\in \mathcal{M}_n$. The monomial $r_{n+1}$ is chosen from the set $R$ and therefore it suffices to show that $x\in \mathcal{M}_{n+1}$ for every $x\in R$. By definition of $\mathcal{M}_n$, $$a_1\geq\cdots\geq a_n \text{ and } a_i-a_{i+1}\leq 1, \;\forall 1\leq i\leq n-1.$$ We give the details only in case that $x\in R$ due to $i>1, a_{i}\neq a_{i-1}$ and $a_i=a_{i+1}$, the other two cases being similar. Let $a_{n+1}=0$. First, notice that $a_n\in\{0,1\}$. Thus, $a_n\geq a_{n+1}$ and $a_n-a_{n+1}\leq 1$. Thus, $x=(a_1,\ldots,a_{i-1}, a_i+1,a_{i+1},\ldots,a_{n+1})\in \mathcal{A}_n$. By the assumptions, $a_{i-1} = a_i+1$ and $a_i+1-a_{i+1}=1$. Thus, $x\in \mathcal{M}_n$.
\end{proof}

\begin{remark}\label{rem; 999}
Recall that in Lemma \ref{lem; 521} we have seen that for every $n\in\N$ and $1\leq j\leq n$ there is a coefficients of $p_n$ that is equal to $j^{n-j}$. Elementary differentiation shows (e.g. \cite[Theorem 3.3]{gray2008largest}) that \begin{equation}\label{eq; 889} \max\{j^{n-j}\;|\;1\leq j\leq n\}=\max\{t^{n-t}\;|\;t\in\{\left\lfloor u\right\rfloor ,\left\lceil u \right\rceil \} \}\end{equation} where $u$ is a solution to the equation $x(1+\ln x)=n,\;x\in\mathbb{R}$. The left-hand side of (\ref{eq; 889}) corresponds to OEIS A003320 which, therefore, provides a lower bound to $(m_n)_{n\in\N}$.

\end{remark}

\section{Open questions}

The following questions remain open and are left for future research:

\begin{enumerate}
    \item Is there a closed formula for the maximal coefficients $(m_n)_{n\in\N}$?
    \item Prove or disprove: The sequence $(s_n)_{n\in\N}$ returned by Algorithm \ref{IR} is equal to $(m_n)_{n\in\N}$.
    \item The sequence $(q_n)_{n\in\N}$ exhibits a nontrivial pattern. Can we predict for which $n\in\N$ it holds that $q_n\in\N$? What is the reason that some natural numbers seem to be missing from $(q_n)_{n\in\N}$? According to the output of  Algorithm \ref{IR}, for $n\leq 200$, those numbers are $15, 51, 54$ and $73$.
    \item For $n<30$ the maximal coefficient is not uniquely attained only for $n=2,5,6,12,13,14$ and $15$. Are those the only cases when this happens? If not, can we predict when?
\end{enumerate}

\bibliography{bibliography}

\begin{thebibliography}{10}

\bibitem{andrews_1984}
G.~E. Andrews.
\newblock {\em The theory of partitions}.
\newblock Encyclopedia of Mathematics and its Applications. Cambridge
  University Press, 1984.

\bibitem{cormen2009introduction}
T.~H. Cormen, C.~E. Leiserson, R.~L. Rivest, and C.~Stein.
\newblock {\em Introduction to algorithms}.
\newblock MIT press, 2009.

\bibitem{detemple2014combinatorial}
D.~DeTemple and W.~Webb.
\newblock {\em Combinatorial reasoning: An introduction to the art of
  counting}.
\newblock John Wiley \& Sons, 2014.

\bibitem{flajolet2009analytic}
P.~Flajolet and R.~Sedgewick.
\newblock {\em Analytic combinatorics}.
\newblock cambridge University press, 2009.

\bibitem{fried2021restrictiveness}
S.~Fried.
\newblock On the restrictiveness of the usual stochastic order and the
  likelihood ratio order.
\newblock {\em Statistics \& Probability Letters}, 170:109012, 2021.

\bibitem{gray2008largest}
R.~Gray and J.~D. Mitchell.
\newblock Largest subsemigroups of the full transformation monoid.
\newblock {\em Discrete mathematics}, 308(20):4801--4810, 2008.

\bibitem{hardy1918asymptotic}
G.~H. Hardy and S.~Ramanujan.
\newblock Asymptotic formula{\ae} in combinatory analysis.
\newblock {\em Proceedings of the London Mathematical Society}, 2(1):75--115,
  1918.

\bibitem{rademacher1938partition}
H.~Rademacher.
\newblock On the partition function $p(n)$.
\newblock {\em Proceedings of the London Mathematical Society}, 2(1):241--254,
  1938.

\bibitem{shallit1982e2972}
J.~O. Shallit.
\newblock E2972. {A}nother appearance of the {C}atalan numbers.
\newblock {\em Amer. Math. Monthly}, 89:698, 1982.

\bibitem{stanley2015catalan}
R.~P. Stanley.
\newblock {\em Catalan numbers}.
\newblock Cambridge University Press, 2015.

\bibitem{uspensky1920asymptotic}
Y.~V. Uspensky.
\newblock Asymptotic expressions of numerical functions occurring in problems
  concerning the partition of numbers into summands.
\newblock {\em Bull. Acad. Sci. de Russie}, 14(6):199--218, 1920.

\end{thebibliography}
\bibliographystyle{plain}

\newpage
\begin{table}
\centering
\vspace*{1mm}
\caption{The maximal coefficients of $p_n$ and their consecutive quotients. The last column lists a monomial of $p_n$ whose coefficient is equal to $m_n$}
\label{table:1}
\tiny
\renewcommand{\arraystretch}{1.50}
\begin{tabular}{ |c|c|c|l| } 
 \hline
 \multicolumn{1}{|c|}{$n$} & \multicolumn{1}{c|}{$m_n$} &\multicolumn{1}{c|}{$q_n$}&\multicolumn{1}{c|}{$(a_1,\ldots,a_n)$}\\
 \hline
1 &1&1 & (1)\\
 \hline
2 &1& 2& (1, 1)\\
 \hline
3& 2& 2& (2, 1, 0)\\
 \hline
4& 4& $\frac{9}{4}$& (2, 1, 1, 0)\\
 \hline
5& 9& 3&  (2, 2, 1, 0, 0)\\
 \hline
6& 27& $\frac{32}{9}$&  (2, 2, 1, 1, 0, 0)\\
 \hline
7& 96& 4& (3, 2, 1, 1, 0, 0, 0)\\
 \hline
8& 384& 4& (3, 2, 1, 1, 1, 0, 0, 0)\\
 \hline
9& 1536& $\frac{625}{128}$& (3, 2, 1, 1, 1, 1, 0, 0, 0)\\
 \hline
10& 7500& 5& (3, 2, 2, 1, 1, 1, 0, 0, 0, 0)\\
 \hline
11& 37500& $\frac{648}{125}$& (3, 2, 2, 1, 1, 1, 1, 0, 0, 0, 0)\\
 \hline
12& 194400& 6&  (3, 2, 2, 2, 1, 1, 1, 0, 0, 0, 0, 0)\\
 \hline
13& 1166400& $\frac{16807}{2592}$& (3, 2, 2, 2, 1, 1, 1, 1, 0, 0, 0, 0, 0)\\
 \hline
14& 7563150& 7&  (3, 3, 2, 2, 1, 1, 1, 1, 0, 0, 0, 0, 0, 0)\\
 \hline
15& 52942050& $\frac{262144}{36015}$& (3, 3, 2, 2, 1, 1, 1, 1, 1, 0, 0, 0, 0, 0, 0)\\ 
 \hline
16& 385351680& 8& (4, 3, 2, 2, 1, 1, 1, 1, 1, 0, 0, 0, 0, 0, 0, 0)\\
 \hline
17& 3082813440& $\frac{531441}{65536}$& (4, 3, 2, 2, 1, 1, 1, 1, 1, 1, 0, 0, 0, 0, 0, 0, 0)\\
 \hline
18& 24998984640& 9& (4, 3, 2, 2, 2, 1, 1, 1, 1, 1, 0, 0, 0, 0, 0, 0, 0, 0)\\
 \hline
19& 224990861760& 9& (4, 3, 2, 2, 2, 1, 1, 1, 1, 1, 1, 0, 0, 0, 0, 0, 0, 0, 0)\\
 \hline
20& 2024917755840 &$\frac{5000000}{531441}$& (4, 3, 2, 2, 2, 1, 1, 1, 1, 1, 1, 1, 0, 0, 0, 0, 0, 0, 0, 0)\\
 \hline
21& 19051200000000& 10& (4, 3, 2, 2, 2, 2, 1, 1, 1, 1, 1, 1, 0, 0, 0, 0, 0, 0, 0, 0, 0)\\
 \hline
22& 190512000000000& $\frac{214358881}{21000000}$ &(4, 3, 2, 2, 2, 2, 1, 1, 1, 1, 1, 1, 1, 0, 0, 0, 0, 0, 0, 0, 0, 0)\\
 \hline
23& 1944663768432000& 11& (4, 3, 3, 2, 2, 2, 1, 1, 1, 1, 1, 1, 1, 0, 0, 0, 0, 0, 0, 0, 0, 0, 0)\\
 \hline
24& 21391301452752000& $\frac{214990848}{19487171}$& (4, 3, 3, 2, 2, 2, 1, 1, 1, 1, 1, 1, 1, 1, 0, 0, 0, 0, 0, 0, 0, 0, 0, 0)\\
 \hline
25& 235998033739776000& 12& (4, 3, 3, 2, 2, 2, 2, 1, 1, 1, 1, 1, 1, 1, 0, 0, 0, 0, 0, 0, 0, 0, 0, 0, 0)\\
 \hline
26& 2831976404877312000& 12& (4, 3, 3, 2, 2, 2, 2, 1, 1, 1, 1, 1, 1, 1, 1, 0, 0, 0, 0, 0, 0, 0, 0, 0, 0, 0)\\
 \hline
27& 33983716858527744000& $\frac{10604499373}{859963392}$& (4, 3, 3, 2, 2, 2, 2, 1, 1, 1, 1, 1, 1, 1, 1, 1, 0, 0, 0, 0, 0, 0, 0, 0, 0, 0, 0)\\
 \hline
28& 419064703766444736000& 13 &(4, 3, 3, 2, 2, 2, 2, 2, 1, 1, 1, 1, 1, 1, 1, 1, 0, 0, 0, 0, 0, 0, 0, 0, 0, 0, 0, 0)\\
 \hline
29& 5447841148963781568000& & (4, 3, 3, 2, 2, 2, 2, 2, 1, 1, 1, 1, 1, 1, 1, 1, 1, 0, 0, 0, 0, 0, 0, 0, 0, 0, 0, 0, 0)\\
 \hline
\end{tabular}
\end{table}

\begin{table}
\centering
\vspace*{1mm}
\caption{Additional monomials of $p_n$ whose coefficients are equal to $m_n$ for $n<30$}
\label{table:2}
\tiny
\renewcommand{\arraystretch}{1.50}
\begin{tabular}{ |c|l| } 
 \hline
   \multicolumn{1}{|c|}{$n$} &\multicolumn{1}{c|}{ $(a_1,\ldots,a_n)$}\\
 \hline
2 & (2, 0)\\
 \hline
5 &(3, 1, 1, 0, 0)\\
 \hline
 6 & (3, 1, 1, 1, 0, 0)\\
 \hline
 12&(3, 3, 2, 1, 1, 1, 1, 0, 0, 0, 0, 0), (4, 2, 2, 1, 1, 1, 1, 0, 0, 0, 0, 0)\\
 \hline
13& (4, 2, 2, 1, 1, 1, 1, 1, 0, 0, 0, 0, 0), (3, 3, 2, 1, 1, 1, 1, 1, 0, 0, 0, 0, 0)\\
 \hline
 14&(4, 2, 2, 2, 1, 1, 1, 1, 0, 0, 0, 0, 0, 0)\\
 \hline
15&(4, 2, 2, 2, 1, 1, 1, 1, 1, 0, 0, 0, 0, 0, 0)\\
 \hline
\end{tabular}
\end{table}
\newpage

\begin{table}
\centering
\vspace*{1mm}
\caption{$s_{30},\ldots,s_{100}$ returned by Algorithm \ref{IR} and their consecutive quotients}
\label{table:3}
\fontsize{4}{4}\selectfont
\renewcommand{\arraystretch}{2.30}
\begin{tabular}{ |c|c|c| } 
\hline
  \multicolumn{1}{|c|}{$n$} & \multicolumn{1}{c|}{$s_n$} &\multicolumn{1}{c|}{$\frac{s_{n+1}}{s_n}$}\\
\hline
30 &5447841148963781568000  & $\frac{ 289254654976 }{ 22024729467 }$\\
\hline
31 &71547458245452693504000  & 14 \\
\hline
32 &1001664415436337709056000 & 14 \\
\hline
33 &14023301816108727926784000   & $\frac{ 8649755859375 }{ 578509309952 }$\\
\hline
34 &209673612792393750000000000  & $\frac{ 962072674304 }{ 64072265625 }$\\
\hline
35 &3148339635292233982279680000 & 16 \\
\hline
36 &50373434164675743716474880000   & 16 \\
\hline
37 &805974946634811899463598080000  & $\frac{ 582622237229761 }{ 35184372088832 }$\\
\hline
38 &13346235805315454304418995840000 & 17 \\
\hline
39 &226886008690362723175122929280000  & $\frac{ 46273255257047040 }{ 2638936015687741 }$\\
\hline
40 &3978404225024620607607354163200000  & 18 \\
\hline
41 &71611276050443170936932374937600000   & $\frac{ 42052983462257059 }{ 2313662762852352 }$\\
\hline
42 &1301601882440198387257545835699200000 & 19 \\
\hline
43 &24730435766363769357893370878284800000  & 19 \\
\hline
44 &469878279560911617799974046687411200000  & $\frac{ 32768000000000000000 }{ 1640066355028025301 }$\\
\hline
45 &9388017391765133721600000000000000000000   & 20 \\
\hline
46 &187760347835302674432000000000000000000000  & $\frac{ 68122318582951682301 }{ 3276800000000000000 }$\\
\hline
47 &3903402780908908402684872776385162240000000 & 21 \\
\hline
48 & 81971458399087076456382328304088407040000000& $\frac{ 68440034007706025984 }{ 3243919932521508681 }$\\
\hline
49 & 1729429060270920090607276147799029186560000000  & 22 \\
\hline
50 & 38047439325960241993360075251578642104320000000 & 22 \\
\hline
51 & 837043665171125323853921655534730126295040000000 & $\frac{ 141050039560662968926103 }{ 6159603060693542338560 }$\\
\hline
52 & 19167638064180062036564117435227926126077952000000 & 23 \\
\hline
53 & 440855675476141426840974701010242300899792896000000 & $\frac{ 145398897491341278707712 }{ 6132610415680998648961 }$\\
\hline
54 & 10452307389872489948744789613053673200242655232000000  & 24 \\
\hline
55 &250855377356939758769874950713288156805823725568000000  & $\frac{ 582076609134674072265625 }{ 24233149581890213117952 }$\\
\hline
56 &6025508444195271521816253662109375000000000000000000000  & 25 \\
\hline
57 &150637711104881788045406341552734375000000000000000000000  & $\frac{ 4116767537697256247666432 }{ 162981450557708740234375 }$\\
\hline
58 &3804975577941702545004131009801025732678899466240000000000  & 26 \\
\hline
59 &98929365026484266170107406254826669049651386122240000000000  & 26 \\
\hline
60 &2572163490688590920422792562625493395290936039178240000000000  & $\frac{ 1570042899082081611640534563 }{ 58959020400027837728817152 }$\\
\hline
61 &68495151317539451014974372699807632703067401288427560000000000  & 27 \\
\hline
62 &1849369085573565177404308062894806082982819834787544120000000000  & $\frac{ 43866262300411718040591269888 }{ 1570042899082081611640534563 }$\\
\hline
63 &51670504955929472544522433459271401555552182671108997120000000000  & 28 \\
\hline
64 &1446774138766025231246628136859599243555461114791051919360000000000  & $\frac{ 176994576151109753197786640401 }{ 6266608900058816862941609984 }$\\
\hline
65 &40862798295083655283632061806834793855899184040599487771040000000000  & 29 \\
\hline
66 &1185021150557426003225329792398209021821076337177385145360160000000000  & 29 \\
\hline
67 &34365613366165354093534563979548061632811213778144169215444640000000000  & $\frac{ 5230176601500000000000000000000 }{ 176994576151109753197786640401 }$\\
\hline
68 &1015501326834227574804448936477849769406960000000000000000000000000000000  & 30 \\
\hline
69 &30465039805026827244133468094335493082208800000000000000000000000000000000  & $\frac{ 645590698195138073036733040138561 }{ 20920706406000000000000000000000 }$\\
\hline
70 &940118652620125224647533652879072642271519651664470795263552602800000000000  & 31 \\
\hline
71 &29143678231223881964073543239251251910417109201598594653170130686800000000000  & $\frac{ 649037107316853453566312041152512 }{ 20825506393391550743120420649631 }$\\
\hline
72 &908277006976838893495396839720732975992938383907565812097329240473600000000000  & 32 \\
\hline
73 &29064864223258844591852698871063455231774028285042105987114535695155200000000000  & 32 \\
\hline
74 &930075655144283026939286363874030567416768905121347391587665142244966400000000000  & $\frac{ 2781855434090103443811378243892171521 }{ 85165837925733364110154508149981184 }$\\
\hline
75 &30379974863093055499068023870097652840716436890381792175415887077743001600000000000  & 33 \\
\hline
76 &1002539170482070831469244787713222543743642417382599141788724273565519052800000000000  & $\frac{ 2847501839779123940187735784914157568 }{ 84298649517881922539738734663399137 }$\\
\hline
77 &33864506118722234127534503334495450730573121152148488603345499618079355699200000000000  & 34 \\
\hline
78 &1151393208036555960336173113372845324839486119173048612513746987014698093772800000000000  & $\frac{ 399669593472470313551127910614013671875 }{ 11557507467338797168997280538769227776 }$\\
\hline
79 &39816271517312974256100509002896515744268277555434859054687500000000000000000000000000000  & 35 \\
\hline
80 &1393569503105954098963517815101378051049389714440220066914062500000000000000000000000000000 & $\frac{ 404140638732382030321569800228268146688 }{ 11419131242070580387175083160400390625 }$\\
\hline
81 & 49320570642735368629992208025957484343031663764994032732691353227472207179939840000000000000 & 36 \\
\hline
82 & 1775540543138473270679719488934469436349139895539785178376888716188999458477834240000000000000 & 36 \\
\hline
83 & 63919459552985037744469901601640899708569036239432266421567993782803980505202032640000000000000 & $\frac{ 59325966985223687799599734398071581327609 }{ 1616562554929528121286279200913072586752 }$\\
\hline
84 & 2345769877936484815469854363747603640274357258394513512999736861193926236361921658880000000000000 & 37 \\
\hline
85 & 86793485483649938172384611458661334690151218560596999980990263864175270745391101378560000000000000 & $\frac{ 36807814606597472834439071501847169493876736 }{ 970059730434063003209671332725224505491985 }$\\
\hline
86 & 3293280220294377971347467731349401295685328549014404380417498855020965676295207000211456000000000000 & 38 \\
\hline
87 & 125144648371186362911203773791277249236042484862547366455864956490796695699217866008035328000000000000& $\frac{ 9093778876146525519753713411306280250639479 }{ 237357799848111043775597979581466351239168 }$\\
\hline
88 & 4794608647994409038997330943549548574735166856026307562390437024839722999393985472296755584000000000000 & 39 \\
\hline
89 & 186989737271781952520895906798432394414671507385025994933227043968749196976365433419573467776000000000000  & $\frac{ 360287970189639680000000000000000000000000000 }{ 9093778876146525519753713411306280250639479 }$\\
\hline
90 & 7408378167700986380828325883329584618472057265089496740817451089920000000000000000000000000000000000000000 & 40 \\
\hline
91 & 296335126708039455233133035333183384738882290603579869632698043596800000000000000000000000000000000000000000 & 40 \\
\hline
92 & 11853405068321578209325321413327335389555291624143194785307921743872000000000000000000000000000000000000000000& $\frac{ 58983677299744401560074115672359981890669066761 }{ 1441151880758558720000000000000000000000000000 }$\\
\hline
93 & 485137915571417159462028231314994899343248266747019135549714969297846930802852529386687060879513600000000000000  & 41 \\
\hline
94 & 19890654538428103537943157483914790873073178936627784557538313741211724162916953704854169496060057600000000000000 & $\frac{ 11152795537017915251768561543888501580276393855418368 }{ 267188865036464121457189393493879454552305181077725 }$\\
\hline
95 & 830260659007146007772745689466656164663688800918177680838519253171995494099296780078316846245644075008000000000000& 42 \\
\hline
96 & 34870947678300132326455318957599558915874929638563462595217808633223810752170464763289307542317051150336000000000000 & $\frac{ 10093776109231555797740541116805209814919811290249 }{ 237279062189656720523796155594134696283911225344 }$\\
\hline
97 & 1483399063252167028338360029439804956347076714866041438488642548695961414227705961250855012767516543942656000000000000 & 43 \\
\hline
98 & 63786159719843182218549481265911613122924298739239781855011629593926340811791356333786765549003211389534208000000000000 & $\frac{ 885182809206724429753029845421970028406525755654144 }{ 20422291197747566381475048306094261718558687959341 }$\\
\hline
99 & 2764744244541327539828782867196968086879145195304124286644923210215005242351869797701380142664027856774889472000000000000 & 44 \\
\hline
100 & 121648746759818411752466446156666595822682388593381468612376621249460230663482271098860726277217225698095136768000000000000 & \\
\hline
\end{tabular}
\end{table}

\end{document}